\newcommand{\vertiii}[1]{{\left\vert\kern-0.25ex\left\vert\kern-0.25ex\left\vert #1 \right\vert\kern-0.25ex\right\vert\kern-0.25ex\right\vert}}
\DeclareMathOperator{\Id}{Id}
\DeclareMathOperator{\ran}{ran}
\DeclareMathOperator{\tr}{tr}
\newcommand{\Fourier}{\mathcal{F}}
\newcommand{\Inverse}{\hat{\mathcal{F}}}
\newcommand{\RKHS}{\mathcal{H}}
\newcommand{\cmplx}{\mathbb{C}}
\newcommand{\norm}[1]{\left\| #1 \right\|}
\newcommand{\abs}[1]{\left| #1 \right|}
\theoremstyle{plain}
\newtheorem{theorem}{Theorem}
\newtheorem{lemma}[theorem]{Lemma}
\newtheorem{proposition}[theorem]{Proposition}
\newtheorem{corollary}[theorem]{Corollary}
\theoremstyle{definition}
\theoremstyle{remark}
\newtheorem{remark}{Remark}
\title[]{On harmonic Hilbert spaces on compact abelian groups}
\author{Suddhasattwa Das}
\address{Department of Mathematics and Statistics, Texas Tech University, Lubock, TX 79409, USA.}
\email{suddas@ttu.edu}
\author{Dimitrios Giannakis}
\address{Department of Mathematics, Dartmouth College, Hanover, NH 03755, USA.}
\email{dimitrios.giannakis@dartmouth.edu}
\begin{document}

\begin{abstract} 
    Harmonic Hilbert spaces on locally compact abelian groups are reproducing kernel Hilbert spaces (RKHSs) of continuous functions constructed by Fourier transform of weighted $L^2$ spaces on the dual group. It is known that for suitably chosen subadditive weights, every such space is a Banach algebra with respect to pointwise multiplication of functions. In this paper, we study RKHSs associated with subconvolutive functions on the dual group. Sufficient conditions are established for these spaces to be symmetric Banach $^*$-algebras with respect to pointwise multiplication and complex conjugation of functions (here referred to as RKHAs). In addition, we study aspects of the spectra and state spaces of RKHAs. Sufficient conditions are established for an RKHA on a compact abelian group $G$ to have the same spectrum as the $C^*$-algebra of continuous functions on $G$. We also consider one-parameter families of RKHSs associated with semigroups of self-adjoint Markov operators on $L^2(G)$, and show that in this setting subconvolutivity is a necessary and sufficient condition for these spaces to have RKHA structure. Finally, we establish embedding relationships between RKHAs and a class of Fourier--Wermer algebras that includes spaces of dominating mixed smoothness used in high-dimensional function approximation.            
\end{abstract}

\keywords{Abelian groups, Banach algebras, reproducing kernel Hilbert spaces, harmonic Hilbert spaces}
\subjclass[2010]{46J10, 46E22, 22B99}
\maketitle

\section{Introduction} 
\label{sec:intro}

The notion of a harmonic Hilbert space on the real line was introduced by Delvos~\cite{Delvos97}, extending earlier work of Babu\v ska \cites{Babuska68a,Babuska68b} and Pr\'ager \cite{Prager79} on periodic Hilbert spaces. Subsequently, Feichtinger, Pandey, and Werther \cite{FeichtingerEtAl07} constructed harmonic Hilbert spaces on arbitrary locally compact abelian groups (LCAs), and characterized their properties using the theory of Wiener amalgam spaces \cite{Feichtinger80}. In essence, a harmonic Hilbert space $\mathcal H_\lambda$ on an LCA $G$ is the image under the Fourier transform of a weighted $L^2$ space on the dual group, $\hat G$, associated with a weight function $ w \equiv  \lambda^{-1/2} : \hat G \to \mathbb R_+$, where $\lambda \in L^1(\hat G)$. Harmonic Hilbert spaces are reproducing kernel Hilbert spaces (RKHSs) of continuous functions, and have a number of useful properties for function approximation on LCAs \cites{Delvos97,Delvos02,FeichtingerEtAl07}. In particular, using results in \cite{Feichtinger79}, it has been shown that if $w$ is subadditive (up to a constant $C$), 
\begin{equation}
    \label{eqSubadd}
    w(\gamma + \gamma') \leq C (w(\gamma) + w(\gamma')), \quad \gamma,\gamma' \in \hat G,
\end{equation}
then $\mathcal H_\lambda$ is a Banach algebra under pointwise function multiplication \cite{FeichtingerEtAl07}.  

In this paper, we study harmonic Hilbert spaces induced by \emph{subconvolutive} functions $\lambda \in L^1(\hat G)$, satisfying
\begin{equation}
    \label{eqSubconv0}
    (\lambda * \lambda) (\gamma) \leq C \lambda (\gamma), \quad \gamma \in \hat G,
\end{equation}
along with strict positivity and symmetry conditions (see \cite{Feichtinger79}*{Satz 3.6} and related ideas in earlier papers such as \cites{Edwards59,Essen73}). A prototypical example is the class of subexponential weights on the dual group $ \hat G = \mathbb Z^d$ of $G = \mathbb T^d $,
\begin{equation}
    \label{eqSubexp}
    \lambda^{-1}(\gamma) = e^{\tau \lvert \gamma \rvert^p}, \quad \tau > 0, \quad p \in (0,1).
\end{equation}
One of our main results, Theorem~\ref{thmMain}, is that on a compact abelian group $G$ every such space $\mathcal H_\lambda$ is a symmetric Banach $^*$-algebra under pointwise function multiplication and complex conjugation. In addition, in Theorem~\ref{thmSpec} we show that $\mathcal H_\lambda$ has the same spectrum (space of maximal ideals) as the $C^*$-algebra of continuous functions on $G$. We also study one-parameter families of RKHSs associated with self-adjoint Markov operators on $L^2(G)$. In this setting, we find that subconvolutivity of $\lambda$ is a necessary and sufficient condition for these spaces to have Banach algebra structure under pointwise function multiplication; see Theorem~\ref{thmMarkov}. 

We will refer to RKHSs which are simultaneously Banach $^*$-algebras with respect to pointwise function multiplication and complex conjugation as \emph{reproducing kernel Hilbert algebras} (RKHAs).  

As an application of our results, we discuss connections between RKHAs and a class of Banach spaces $\mathcal A_w$ on a compact abelian group $G$ induced from weights $w: \hat G \to \mathbb R_+$ such that $\mathcal A_w$ is a subspace of the Wiener algebra $\mathcal A(G)$ of continuous functions with absolutely convergent Fourier series. Spaces in this class have recently received attention in the context of function approximation in high-dimensional periodic domains, $G = \mathbb T^d$ with $d \gg 1$, where the weight function $w \equiv w_{s,r}$ is oftentimes from the class of dominating mixed smoothness, 
\begin{equation}
    \label{eqWSR}
    w_{s,r}(\gamma) = \prod_{i=1}^d (1 + \lvert \gamma_i \rvert^r)^{s/r}, \quad r,s \in (0, \infty),
\end{equation}
with $\gamma = (\gamma_1, \ldots, \gamma_d) \in \hat G = \mathbb Z^d$ and a related definition for $r = \infty $ \cites{KolomoitsevEtAl21,NguyenEtAl22}. Kolomoitsev, Lomako, and Tikhonov \cite{KolomoitsevEtAl21} refer to the spaces $\mathcal A_w(\mathbb T^d)$ as weighted Wiener spaces. In the paper \cite{NguyenEtAl22}, Nguyen, Nguen, and Sickel call $\mathcal A_w(\mathbb T^d)$ weighted Wiener algebras, though they do not claim that these spaces are actually algebras under pointwise multiplication.  In Section~\ref{secWienerAlg}, we consider cases where $\mathcal A_w$ and $\mathcal H_\lambda$ with $\lambda = w^{-2}$ are both Banach algebras associated with subconvolutive weights (which include the $w_{s,r}$ family for $s \geq 2$), using results from \cites{Brandenburg1975,Kuznetsova06} to verify the Banach algebra structure of $\mathcal A_w$. Following a suggestion of Feichtinger \cite{Feichtinger22}, we call the Banach algebras $\mathcal A_w$ Fourier--Wermer algebras owing to the fact that they are Fourier images of convolution algebras $L^1_w(\hat G)$ on the dual group $\hat G$ that were studied in the early paper of Wermer \cite{Wermer54}. We establish embedding relationships between $\mathcal A_w$ and $\mathcal H_\lambda$ that should be useful in function approximation applications.       

\subsection{\label{secRelated}Related work} 

The study of Banach algebras on LCAs associated with subconvolutive weights was initiated by Feichtinger \cite{Feichtinger79}. That work established that $L^\infty_w$ is a Banach convolution algebra iff $ w^{-1}$ is subconvolutive. It was also shown that if $w$ is subadditive and $w^{-1}$ lies in $L^1$, then $w^{-1}$ is subconvolutive. Thus, subadditive weights with integrable inverses provide a useful route to constructing subconvolutive weights and associated $L^\infty_w$ convolution algebras. In general, however, subadditivity and subconvolutivity are independent notions. Indeed, the subexponential weights $w=\lambda^{-1}$ from~\eqref{eqSubexp} are subconvolutive but not subadditive. Another relevant property of weight functions on LCAs is \emph{submultiplicativity}, 
\begin{displaymath}
    w(\gamma+\gamma') \leq C w(\gamma)w(\gamma'),
\end{displaymath}
the main result here being that $L^1_w$ is a Banach convolution algebra iff $w$ is submultiplicative \cite{Feichtinger79}. 

Besides $L^1_w$ and $L^\infty_w$, sufficient conditions for $L^p_w$, $ 1 < p < \infty $, to be Banach convolution algebras are also known. An early reference in that direction is the paper of Wermer \cite{Wermer54}, which studies $L^p_w$ convolution algebras on the real line, including the case $p=1$. More recently, Kuznetsova \cite{Kuznetsova06} has shown (using methods of proof for sequence spaces from~\cite{Nikolskii74}) that for a locally compact group, $L^p_w$ with $ 1 < p < \infty$ is a convolution algebra if
\begin{equation}
    \label{eqSubconvW}
    (w^{-p} * w^{-p})(\gamma) \leq C w^{-p}(\gamma).
\end{equation}
When $p=2$, the latter condition is equivalent to~\eqref{eqSubconv0} with $ \lambda = w^{-2}$. Related results for weighted convolution algebras on $\mathbb R^n$ have been obtained by Kerman and Sawyer \cite{KermanSawyer94}. 

See \cites{ReitSteg2000,Grochenig07} for surveys of weight functions in harmonic analysis. Closely related to weight function theory is the theory of Sobolev spaces and algebras on LCAs, e.g., \cites{FeichtingerWerther04,Gorka2014,BernicotFrey2018,BrunoEtAl2019,GorkaKostrzewa20}. 

In the paper \cite{Essen73}, Ess\'en studies the spectral properties of Banach convolution algebras on $\mathbb Z$ or $\mathbb R$ under subconvolutive weights with additional decay conditions. In \cite{Brandenburg1975}, Brandenburg establishes sufficient conditions for the equivalence of the spectrum of unital commutative Banach algebra $B$ and a subalgebra $S \subset B$. Using these conditions, is shown that for a subadditive weight function $w \geq 1$ on an LCA, the space $L^1 \cap L^\infty_w$ (equipped with the norm $\lVert f \rVert_{L^1} + \lVert f \rVert_{L^\infty_w}$) is a Banach convolution algebra which has the same spectrum as the group convolution algebra $L^1$. The spectral properties of $L^p_w$ convolution algebras on LCAs were studied in \cite{Kuznetsova06}, where it was shown that every such algebra is semisimple. In more recent work, Kuznetsova and Molitor-Braun \cite{KuznetsovaMolitorBraun12} studied the representation theory for convolution Banach $^*$-algebras on locally compact, non-abelian groups, and established sufficient conditions for these algebras to be symmetric (i.e., the spectra of positive elements are subsets of the positive half-line), among other results. 

A related notion of Banach algebras on locally compact groups with Hilbert space structure is the class of $H^*$-algebras proposed by Ambrose \cite{Ambrose45}. An $H^*$-algebra $H$ is required to satisfy the identity
\begin{equation}
    \label{eqHStar}
    \langle fg, h \rangle_H = \langle g, f^*h \rangle_H = \langle f, h g^* \rangle_H, \quad \forall g,h\in H,   
\end{equation}
which implies that for each $f\in H$, the operations of left and right multiplication by $f$ are norm-preserving, $^*$-homomorphisms between $H$ and $B(H)$ (the $C^*$-algebra of bounded linear maps on $H$). A classical example of an $H^*$-algebra is the $L^2$ convolution algebra on a compact group. Note that the RKHAs and weighted convolution algebras on LCAs mentioned above do not, in general, satisfy~\eqref{eqHStar}.

In this paper, we focus on the setting of \emph{compact} abelian groups, which allows us to approach the problem of constructing RKHAs using Mercer theory. In particular, in this setting the values $\lambda(\gamma)$ correspond to the eigenvalues of a compact integral operator on $L^2(G)$ associated with the reproducing kernel of $\mathcal H_\lambda$, and the characters $\gamma \in \hat G$, $ \gamma : G \to  S^1 \subset \mathbb C$ form a corresponding orthogonal eigenbasis. This structure allows us to deduce that $\mathcal H_\lambda$ is a Banach algebra if $\lambda \in L^1(\hat G)$ is subconvolutive, which is more general than the subconvolutivity implied by subadditivity of $ \lambda^{-1} = w^2$. Mercer theory also facilitates the characterization of the associated maximal ideal spaces using kernel integral operators. 

\subsection{Plan of the paper} 

We introduce our notation in Section~\ref{secPrelim}, and give an overview of relevant results from RKHS theory in Section~\ref{secRKHS}. In Section~\ref{secSubconv}, we describe the construction of RKHAs on compact abelian groups associated with subconvolutive functions. In Section~\ref{sec:spec}, we study the spectra and state spaces of RKHAs (not necessarily associated with subconvolutive functions). In Section~\ref{secMarkov} we study one-parameter families of RKHAs associated with Markov semigroups. Section~\ref{secWienerAlg} discusses aspects of Fourier--Wermer algebras associated with subconvolutive weights and their embedding relationships with RKHAs. Appendix~\ref{sec:proof:LCA} contains the proof of an auxiliary result, Lemma~\ref{lem:LCA}, on integral operators associated with translation-invariant kernels on LCAs.      

\section{\label{secPrelim}Notation and preliminaries}

\subsection{Locally compact abelian groups}
Consider an LCA $G$ equipped with a Haar measure $\mu$. In most instances, we will assume that $G$ is compact, in which case $\mu$ will be normalized to a probability measure. We let $\hat{G} $ denote the dual group of $G$, i.e.,  the abelian group of continuous homomorphisms $\gamma : G \to S^1$, equipped with its dual measure, $ \hat \mu $ \cites{Rudin1962,Morris1977}. We identify each element of $ \hat{G} $ with a continuous, complex-valued function on $G$, taking values in the unit circle $S^1 \subset \mathbb C $, and acting on $ \mathbb C$ multiplicatively as a unitary character. The trivial character in $ \hat G $ will be denoted by $0_{\hat G}$. When there is no risk of confusion with scalar multiplication of functions, the inverse of $ \gamma \in \hat G $ will be denoted by $-\gamma$. We recall that if $G$ is compact, $\hat G $ has a discrete topology and $\hat \mu $ is a weighted counting measure. 

In what follows, $C_0(G)$ (resp.\ $C_0( \hat{G})$) will denote the Banach space of complex-valued, continuous functions on an LCA $G$ (resp.\ its dual $ \hat{G}$) vanishing at infinity, equipped with the uniform norm. Moreover, $\Fourier : L^1(G) \to C_0(\hat{G})$ and $\Inverse : L^1(\hat{G})\to C_0(G)$ will denote the Fourier and inverse Fourier transforms, respectively,  i.e.,
\[
 \Fourier f(\gamma) := \int_G f(x) \gamma(-x) \, d\mu(x), \quad \Inverse  \hat f( x ) := \int_{\hat{G} } \hat f(\gamma ) \gamma(x) \, d\hat \mu(\gamma).
\]
We also use $*$ and $^\star$ to denote the convolution (algebraic product) and antilinear involution operations on the group algebra $L^1(G)$, respectively, i.e., 
\[( f * g )(x) = \int_G f(x - y) g(y) \, d\mu(y), \quad f^\star(x) = \overline{f(-x)}, \]
and a similar notation for the corresponding operations on $L^1(\hat{G})$. When working with Hilbert spaces such as $L^2(G)$ or $\mathcal H_\lambda$, we will adopt the convention that the inner product is antilinear in the first argument, e.g., $\langle f, g \rangle_{L^2(G)} = \int_G \bar f g \, d\mu$.

Throughout the paper, $S^x $ will denote the (left) shift operator by group element $ x \in G$, i.e., $ S^x f( y ) = f( x + y ) $ for any element $ y \in G $ and function $ f : G \to \mathbb C$. The collection $ \{ S^x \}_{x \in G}$ forms a strongly continuous group of isometries on any of the spaces $L^p(G)$ with $1 \leq p < \infty$ and $C_0(G)$. We also recall the standard property of Fourier transforms that 
\begin{equation}\label{eqn:FourierShift}
\Fourier(S^x f)(\gamma) = \gamma(x) ( \Fourier f )( \gamma ), \quad \forall f \in L^1(G), \quad \forall x \in G, \quad \forall \gamma \in \hat{G}. 
\end{equation}

Reusing notation, we shall let $ \mathcal F : L^2(G) \to L^2( \hat G)$ denote the unitary extension of the Fourier operator. Similarly, we will continue to use $*$ to denote the convolution operator on $L^2(G)$. Note that for any $f,g \in L^2(G)$, $ f * g $ lies in $C_0(G)$ \cite{Folland95}*{Proposition~2.40}, and we have 
\begin{equation}
    \label{eqConvShift}
    ( f * g )(x) = \langle f^\star, S^x g \rangle_{L^2(G)}. 
\end{equation}
We will let $\hat S^\gamma$ be the shift operator on the dual group $\hat G$, defined analogously to $S^x$ on $G$ and satisfying the corresponding relations in~\eqref{eqn:FourierShift} and~\eqref{eqConvShift}. 

We say that a function $f:G\to \cmplx$ on an LCA is \emph{uniformly continuous} if for every $\epsilon>0$ there is a neighborhood $U$ of the identity element of $G$, such that for every $x\in G$ and $ y\in  x+U$, $ \abs{ f(x) - f(y) } <\epsilon $. This notion is equivalent to uniform continuity of functions on uniform spaces \cite{James_uniform_2012}*{Definition~7.6}.

\subsection{Reproducing kernel Hilbert algebras}

Let $\lambda$ be a positive-valued function in $L^1(\hat G)$. In this paper, our focus is on RKHSs on the group $G$ with translation-invariant kernels $k: G \times G \to \mathbb C$ induced by $\lambda$, viz.
\begin{equation} \label{eqK}
    k(x,y) = l( x - y ), \quad l :=  \hat{ \Fourier } \lambda \in C_0(G).
\end{equation}
By Bochner's theorem for LCAs \cite{Rudin1962}*{Section~1.4.2}, $k $ is the reproducing kernel of an RKHS $ \mathcal H_\lambda $ of continuous functions; that is, 
\begin{displaymath}
    f(x) = \langle k(x,\cdot), f \rangle_{\mathcal H_\lambda}, \quad \forall f \in \mathcal H_\lambda, \quad \forall x \in G,
\end{displaymath}
which expresses the fact that pointwise evaluation functionals on $\mathcal H_\lambda$ are continuous. Note that $\mathcal H_\lambda$ is a harmonic Hilbert space associated with the weight function $w = \lambda^{-1/2}$. 

We will say that $\mathcal H_\lambda$ is a \emph{reproducing kernel Hilbert algebra} (RKHA) if it is a Banach $^*$-algebra with respect to pointwise function multiplication and complex conjugation, i.e., 
\begin{equation}
    \label{eqRKHA}
    \lVert fg\rVert_{\mathcal H_\lambda} \leq C \lVert f\rVert_{\mathcal H_\lambda} \lVert g\rVert_{\mathcal H_\lambda}, \quad \lVert f^* \rVert_{\mathcal H_\lambda} \equiv  \lVert \bar f \rVert_{\mathcal H_\lambda} = \lVert f \rVert_{\mathcal H_\lambda}, \quad \forall f, g \in \mathcal H_\lambda.
\end{equation}
If $\mathcal H_\lambda$ is unital, we will use the symbol $1_G$ to denote the unit of $ \mathcal H_\lambda$, i.e., the function equal to 1 at every point in $G$. Note that we do not require that the norm of $1_G$ is equal to 1. We will also let $\sigma_\lambda(f)$ denote the spectrum of $f \in \mathcal H_\lambda$, i.e., the set of complex numbers $z$ such that $ f - z $ does not have a multiplicative inverse in $\mathcal H_\lambda$. We recall that $\mathcal H_\lambda$ is \emph{symmetric} as a unital Banach $^*$-algebra if $\sigma_\lambda(f^*f)$ is a subset of $[0, \infty) $ for every element $f\in \mathcal H_\lambda$.   

\begin{remark}
    \label{remC}We have stated the Banach algebra condition $ \lVert fg \rVert_{\mathcal H_\lambda} \leq C \lVert f \rVert_{\mathcal H_\lambda} \lVert g \rVert_{\mathcal H_\lambda}$  allowing a general constant $C$, as opposed to the more conventional definition $ \lVert fg \rVert_{\mathcal H_\lambda} \leq \lVert f \rVert_{\mathcal H_\lambda} \lVert g \rVert_{\mathcal H_\lambda}$. This choice does not affect any of the results presented below, as $C $ can be absorbed in a redefinition of the reproducing kernel $k $ of $\mathcal H_\lambda$ to a scaled kernel $ \tilde k := C^2 k $. The corresponding RKHA, $\tilde{\mathcal H}_\lambda$, has the same elements as $\mathcal H_\lambda$, and satisfies $ \lVert fg \rVert_{\tilde{\mathcal H}_\lambda} \leq \lVert f \rVert_{\tilde{\mathcal H}_\lambda} \lVert g \rVert_{\tilde{\mathcal H}_\lambda}$, so we can view $ \lVert \cdot \rVert_{\tilde{\mathcal H}_\lambda}$ as an equivalent norm to $ \lVert \cdot \rVert_{\mathcal H_\lambda}$. That being said, it should be kept in mind that a number of standard results on Banach algebras require appropriate modification when $C > 1 $. For example, the fact that a state on a unital Banach $^*$-algebra with isometric involution has unit operator norm does not necessarily hold when $ C > 1$. In the present work, it is natural to allow a general $C$, as this enables a direct identification of the reproducing kernels of certain RKHAs with Markov transition kernels without having to employ additional normalization factors. 
\end{remark}

\section{\label{secRKHS}Results from reproducing kernel Hilbert space theory}

In this section, we collect results from RKHS theory that will be useful in the analysis that follows.   

First, consider a locally compact Hausdorff space $X$. We use $ \mathcal P(X)$ to denote the set of Borel probability measures on $X$ and $ \mathbb E_\nu(\cdot) = \int_X (\cdot) \, d\nu$ the expectation operator with respect to $\nu \in \mathcal P(X)$.

We recall that a positive-definite kernel $k$ on $X$ whose corresponding RKHS $\mathcal H $ is dense in $C_0(X)$ is known as \emph{$C_0$-universal} \cite{CarmeliEtAl10}, or \emph{$C$-universal} if $X$ is compact \cite{Steinwart_inflnc_2001}. If the map $ R : \mathcal P( X ) \to \mathcal H$ with 
\begin{equation}
    \label{eqKME} R(\nu) = \int_X k(x,\cdot)\, d\nu(x) 
\end{equation}
is well-defined and injective, the kernel $k$ is called \emph{characteristic} \cite{FukumizuEtAl07}. In that case, $R$ is referred to as a \emph{kernel mean embedding} of probability measures \cite{SmolaEtAl07}. Moreover, for any $f\in \mathcal H $ and $\nu \in \mathcal P(X)$, we have
\begin{displaymath}
    \mathbb E_\nu f = \langle R(\nu), f \rangle_{\mathcal H}.
\end{displaymath}
Thus, we can evaluate expectation values of elements of $\mathcal H$ by means of Hilbert space inner products. For a characteristic kernel, the \emph{feature map} $ F : X \to \mathcal H$ with $ F(x) = k(x,\cdot) $ is injective, and has linearly independent range \cite{Steinwart_inflnc_2001}.  

On a compact Hausdorff space $X$, a $C$-universal kernel $k$ is strictly positive-definite and characteristic \cites{GrettonEtAl07,SriperumbudurEtAl2011}. Moreover, the kernel mean embedding induced by $k$ metrizes the weak-$^*$ topology of $\mathcal P(X)$ \cite{SriperumbudurEtAl2010}. That is, a sequence of measures $\nu_j \in \mathcal P(X)$ converges to $\nu \in \mathcal P(X)$ weak-$^*$ sense iff $R(\nu_j)$ converges to $R(\nu)$ in the norm of $\mathcal H$. See \cite{FukumizuEtAl08} for a study of characteristic kernels on LCAs.

The following are standard results from Mercer theory \cites{FerreiraMenegatto2013, SriperumbudurEtAl2011, PaulsenRaghupathi2016}, which we state without proof.

\begin{lemma} \label{lem:Mercer}
Let $X$ be a compact Hausdorff space, $\mu $ a finite Borel measure with full support in $X$, and $ k : X \times X \to \mathbb C$ a positive-definite, continuous kernel with associated RKHS $\RKHS$. Then, the following hold.
\begin{enumerate}[(i)]
        \item $\RKHS$ is a subspace of $C(X) $, and the inclusion $ \RKHS \hookrightarrow C(X)$ is compact.
        \item $ K : L^2(X) \to \RKHS$ with $ K f = \int_X k( \cdot, x ) f(x) \, d\mu(x) $ is a well-defined, compact integral operator with dense range. 
        \item The adjoint $K^* : \RKHS \to L^2(X)$ is equal to the restriction of the inclusion map $ C(X) \hookrightarrow L^2(X) $ on $ \RKHS$; that is, $ K^* f = f $, $ \mu$-a.e.
        \item $ \mathcal K := K^* K $ is a positive, self-adjoint Hilbert-Schmidt operator on $L^2(X)$, with eigenvalues $ \lambda_0 \geq \lambda_1 \geq \cdots \searrow 0$ and a corresponding orthonormal basis $ \{ \phi_0, \phi_1, \ldots \} $ of eigenfunctions. Moreover, if $ k$ is positive-valued, $ \mathcal K$ is of trace class. 
        \item The set $ \{ \psi_j = \lambda_j^{-1/2 } K \phi_j : \lambda_j > 0 \} $ is an orthonormal basis of $\RKHS$, satisfying $K^* \psi_j = \lambda_j^{1/2} \phi_j$. 
        \item The kernel admits the Mercer series expansion
            \begin{displaymath}
                k(x,y) = \sum_{j : \lambda_j >0 } \overline{\psi_j(x)} \psi_j(y),
            \end{displaymath}
            which converges uniformly for $ (x,y) \in X \times X$.
        \item If the $\lambda_j$ are strictly positive, $k$ is $C$-universal (and thus strictly positive-definite and characteristic).  
    \end{enumerate}
\end{lemma}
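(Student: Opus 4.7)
The plan is to prove the seven claims sequentially, using the standard Mercer-theoretic machinery in its complex-valued, abstract RKHS form. The groundwork is that continuity of $k$ on the compact $X\times X$ gives $M := \sup_{x\in X}\sqrt{k(x,x)} < \infty$ and, in particular, $k\in L^2(X\times X,\mu\otimes\mu)$.

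For (i), the reproducing identity yields $|f(x)| \leq M\|f\|_{\mathcal H}$ and $|f(x) - f(y)| \leq \|k(x,\cdot) - k(y,\cdot)\|_{\mathcal H}\|f\|_{\mathcal H}$, with the latter kernel-norm controlled by $\sqrt{k(x,x) + k(y,y) - 2\operatorname{Re} k(x,y)}$. Uniform continuity of $k$ on $X\times X$ then produces $\mathcal H \hookrightarrow C(X)$ boundedly, and compactness of the inclusion follows from Arzel\`a--Ascoli applied to norm-bounded sets in $\mathcal H$. For (ii)--(iv), a Fubini plus reproducing-property calculation shows $\langle Kf, h\rangle_{\mathcal H} = \int_X f\bar h\,d\mu$ for $h\in\mathcal H$, which simultaneously establishes that $K$ is well-defined and bounded into $\mathcal H$ and identifies $K^*$ with the inclusion $\mathcal H \hookrightarrow L^2(X)$, proving (iii). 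The composition $\mathcal K = K^*K$ then agrees on $L^2(X)$ with the integral operator of kernel $k\in L^2$, which is Hilbert--Schmidt, positive, and self-adjoint; this gives compactness of $K$ and all claims in (iv), while the trace-class property when $k$ is positive-valued follows from $\tr\mathcal K = \int_X k(x,x)\,d\mu(x) \leq M^2\mu(X) < \infty$. Density of the range of $K$ is equivalent to injectivity of $K^*$, which holds because a continuous function vanishing $\mu$-a.e.\ on a full-support space vanishes identically.

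For (v)--(vi), the spectral theorem applied to the compact self-adjoint $\mathcal K$ produces the orthonormal eigenbasis $\{\phi_j\}\subset L^2(X)$ with eigenvalues $\lambda_j\searrow 0$. Setting $\psi_j = \lambda_j^{-1/2}K\phi_j$ for $\lambda_j>0$, orthonormality in $\mathcal H$ is the one-line computation $\langle \psi_i,\psi_j\rangle_{\mathcal H} = (\lambda_i\lambda_j)^{-1/2}\langle \mathcal K\phi_i, \phi_j\rangle_{L^2} = \delta_{ij}$, and completeness follows from density of $\operatorname{ran} K$; the relation $K^*\psi_j = \lambda_j^{1/2}\phi_j$ is then automatic. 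Expanding $k(x,\cdot)\in\mathcal H$ against $\{\psi_j\}$ and using the reproducing property $\langle k(x,\cdot),\psi_j\rangle_{\mathcal H} = \overline{\psi_j(x)}$ yields the Mercer identity pointwise. The step I expect to require the most care is uniform convergence of this series: the partial-sum remainders $k_N(x,y) := k(x,y) - \sum_{j<N}\overline{\psi_j(x)}\psi_j(y)$ are positive-definite continuous kernels, so the diagonals $k_N(x,x) = \sum_{j\geq N}|\psi_j(x)|^2$ decrease monotonically to zero pointwise; Dini's theorem on the compact $X$ upgrades this to uniform convergence on the diagonal, and the Cauchy--Schwarz bound $|k_N(x,y)|^2 \leq k_N(x,x)\,k_N(y,y)$ propagates uniform convergence off the diagonal.

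For (vii), strict positivity of all $\lambda_j$ makes $\mathcal K$ injective on $L^2(X)$, hence $\{\psi_j\}$ is a complete orthonormal system in $\mathcal H$. Uniform Mercer convergence then implies that finite linear combinations of the $\psi_j$ approximate $k(x,\cdot)$ uniformly, and via the standard argument for $C_0$-universality these span a sup-norm dense subspace of $C(X)$, establishing $C$-universality. The cited results of \cite{GrettonEtAl07,SriperumbudurEtAl2011} then upgrade this to strict positive-definiteness and the characteristic property, completing the lemma.
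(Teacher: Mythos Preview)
The paper does not prove this lemma at all: it is introduced as ``standard results from Mercer theory \cite{FerreiraMenegatto2013, SriperumbudurEtAl2011, PaulsenRaghupathi2016}, which we state without proof.'' Your proposal therefore cannot be compared against a proof in the paper, but it is a correct and well-organized account of the standard Mercer-theoretic arguments, and it would serve perfectly well as a self-contained proof of the lemma.

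Two minor comments. First, your trace-class argument in (iv) actually shows that $\mathcal K$ is trace class whenever $k$ is continuous and positive-definite on compact $X$ (since $k(x,x)\geq 0$ automatically), so the hypothesis ``$k$ positive-valued'' in the statement is not really needed for your argument; this is fine, you are proving something slightly stronger. Second, your treatment of (vii) is the only place where the argument is sketched rather than given: the phrase ``via the standard argument for $C_0$-universality'' hides the step that passes from completeness of $\{\phi_j\}$ in $L^2(\mu)$ to density of $\mathcal H$ in $C(X)$, which genuinely requires an additional duality/measure-theoretic argument (annihilating signed measures vanish) and is not a direct consequence of uniform Mercer convergence alone. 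Since the paper itself defers this to the cited references, your level of detail is consistent with the paper's, but if you intend this as a full proof you should either spell out that duality step or cite the precise result in \cite{SriperumbudurEtAl2011}.
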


Next, we state certain properties of integral operators associated with translation-invariant kernels on LCAs.

\begin{lemma}\label{lem:LCA}
    Let $\lambda \in L^1(\hat G ) \cap C_0(\hat G)$ be an absolutely integrable, continuous positive-valued function on the dual group $\hat G$ of an LCA $G$ with absolutely integrable Fourier transform $l = \hat{\mathcal F}\lambda \in L^1(G)$, and let $k : G \times G \to \mathbb C$ be the corresponding translation-invariant reproducing kernel from~\eqref{eqK}. Then:
    \begin{enumerate}[(i)] 
        \item $k$ is uniformly continuous, and $ \RKHS_\lambda $ is a subspace of $C_0(G)$. 
        \item The integral operator 
            \begin{displaymath}
                K : f \mapsto \int_G k( \cdot , x ) f(x) \, d\mu(x) 
            \end{displaymath}
            maps $L^\infty(G)$ into the Banach space of bounded functions on $G$. Moreover, $ K$ maps $L^2(G)$ and $L^1(G)$ into the space of uniformly continuous functions on $G$.
        \item For every $x \in G$ and  $\gamma \in \hat{G}$, 
	\begin{displaymath} 
        K \gamma(x) = \lambda(\gamma) \gamma(x).
\end{displaymath}
    \end{enumerate}
\end{lemma}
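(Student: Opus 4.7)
The plan is to treat all three parts via Fourier analysis on the LCA, using the identification $Kf = l * f$ together with the standard Fourier isomorphism between $\mathcal{H}_\lambda$ and the weighted Hilbert space $L^2(\hat G, \lambda^{-1}\, d\hat\mu)$.

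For part (i), I would first show that $l = \hat{\mathcal{F}}\lambda$ is uniformly continuous on $G$. Using the identity $\gamma(x) - \gamma(y) = \gamma(y)(\gamma(x-y) - 1)$, one has
\[
|l(x) - l(y)| \leq \int_{\hat G} \lambda(\gamma)\, |\gamma(x-y) - 1|\, d\hat\mu(\gamma),
\]
and the integrand is dominated by $2\lambda \in L^1(\hat G)$. Dominated convergence sends the right-hand side to $0$ as $x - y \to 0_G$, uniformly in $y$; translation invariance $k(x,y) = l(x - y)$ then promotes this to uniform continuity of $k$ on $G \times G$. For $\mathcal{H}_\lambda \subset C_0(G)$, I would invoke the Fourier isomorphism $\mathcal{H}_\lambda \cong L^2(\hat G, \lambda^{-1}\, d\hat\mu)$, and use Cauchy--Schwarz to obtain
\[
\int_{\hat G} |\hat f|\, d\hat\mu \leq \|f\|_{\mathcal{H}_\lambda}\, \|\lambda\|_{L^1(\hat G)}^{1/2},
\]
so that $\hat f \in L^1(\hat G)$ and Riemann--Lebesgue gives $f = \hat{\mathcal{F}}\hat f \in C_0(G)$.

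For part (ii), I would realize $Kf = l * f$ and pass to Fourier transforms. Since $\lambda \in L^1(\hat G) \cap L^\infty(\hat G) \subset L^2(\hat G)$, Plancherel gives $l \in L^2(G)$. For $f \in L^2(G)$ we have $\mathcal F(Kf) = \lambda\, \mathcal F f$, and $\lambda\, \mathcal F f \in L^1(\hat G)$ by Cauchy--Schwarz, so $Kf \in C_0(G)$; the dominated-convergence scheme from part (i), now with $\lambda |\mathcal F f|$ in place of $\lambda$, yields uniform continuity. For $f \in L^1(G)$, $\mathcal F f \in L^\infty(\hat G)$ and $\lambda\, \mathcal F f \in L^1(\hat G)$ since $\lambda \in L^1(\hat G)$; the same route applies. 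For $f \in L^\infty(G)$, boundedness of $Kf$ follows from $|Kf(x)| \leq \|l\|_{L^1(G)}\|f\|_{L^\infty(G)}$. For part (iii), the formula is a direct calculation: changing variables $z = x - y$ gives
\[
K\gamma(x) = \int_G l(z)\, \gamma(x - z)\, d\mu(z) = \gamma(x) \int_G l(z)\, \overline{\gamma(z)}\, d\mu(z) = \gamma(x)\, (\mathcal F l)(\gamma),
\]
and Fourier inversion $\mathcal F l = \mathcal F \hat{\mathcal{F}} \lambda = \lambda$, combined with continuity of both sides, identifies $(\mathcal F l)(\gamma)$ with $\lambda(\gamma)$pointwise.

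The hard part is justifying the Fourier-inversion and integrability assumptions in the non-compact LCA setting: the integrability $l \in L^1(G)$, needed both for the $L^\infty$-boundedness of $K$ in (ii) and the pointwise Fourier inversion $\mathcal F l = \lambda$ in (iii), is not automatic from $\lambda \in L^1(\hat G) \cap C_0(\hat G)$. On a compact $G$ it is immediate from continuity of $l$ and finiteness of $\mu$, but in the general LCA case one has to extract the needed integrability from the hypotheses, for instance by first reducing to $\lambda$ with Fourier transform of compact support and invoking a density argument, or by exploiting the continuity of $\lambda$ at $0_{\hat G}$ to upgrade the $L^2$-level Fourier identity to a pointwise one.
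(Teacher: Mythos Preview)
Your proposal is correct and takes a genuinely different route from the paper's proof. The paper argues almost entirely on the $G$-side: for (i) it simply quotes that any $C_0(G)$ function is uniformly continuous and then shows $\mathcal H_\lambda\subset C_0(G)$ by approximating with finite sums of kernel sections together with the bound $\lVert f\rVert_{\sup}\le \lVert l\rVert_{C_0(G)}\lVert f\rVert_{\mathcal H_\lambda}$; for (ii) it writes $Kf(x)=\langle S^x l,f\rangle$ and uses strong continuity of the shift $S^x$ on $C_0(G)$ and $L^1(G)$; for (iii) it invokes the shift identity $\mathcal F(S^x l)(\gamma)=\gamma(x)(\mathcal F l)(\gamma)$. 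By contrast you work on the $\hat G$-side throughout, pushing everything through $Kf=l*f$ and $\mathcal F(Kf)=\lambda\,\mathcal Ff$, then reading off $C_0$-membership and uniform continuity from $\lambda\,\mathcal Ff\in L^1(\hat G)$ via Riemann--Lebesgue and dominated convergence. Your argument is more systematic and, notably, establishes $l\in L^2(G)$ directly from $\lambda\in L^2(\hat G)$ by Plancherel, whereas the paper deduces $l\in L^2(G)$ from the interpolation bound $\lVert l\rVert_{L^2}^2\le \lVert l\rVert_{C_0}\lVert l\rVert_{L^1}$, which already presupposes $l\in L^1(G)$.

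On the point you flag as the ``hard part'': you are right that $l\in L^1(G)$ is the crux for the $L^\infty$-boundedness in (ii) and for the pointwise Fourier inversion $\mathcal F l=\lambda$ in (iii), and that it is not automatic from $\lambda\in L^1(\hat G)\cap C_0(\hat G)$ on a general LCA. The paper's proof simply asserts ``since $\lambda$ lies in $L^1(\hat G)\cap C_0(\hat G)$, we have $l=\hat{\mathcal F}\lambda\in L^1(G)\cap C_0(G)$'' and proceeds; so the two proofs share exactly the same unproved step, and your identification of it is accurate rather than a defect relative to the paper. In the paper's intended applications $G$ is compact, where the issue evaporates.
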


\begin{proof}
    See Appendix~\ref{sec:proof:LCA}.\phantom\qedhere
\end{proof}

Recall that if $G$ is compact, then $\hat{G}$ has a discrete topology. By Lemmas~\ref{lem:Mercer} and~\ref{lem:LCA}(iii), when $G$ is compact we can identify the eigenfunctions $ \phi_j $ of the (compact) integral operator $ \mathcal K : L^2(G) \to L^2(G)$ with the characters of $G$, $\phi_j \equiv \phi_\gamma \equiv \gamma$. Using $ \Lambda \subseteq \hat G$ to denote the set $\{ \gamma \in \hat{G}: \lambda(\gamma) > 0 \}$, and defining $ \xi(\gamma) := \sqrt{\lambda(\gamma)}$, the corresponding basis functions $ \psi_j \equiv \psi_\gamma$ of $\mathcal H_\lambda$ are
\begin{equation}\label{eqn:def:psi}
    \psi_{\gamma} := \frac{1}{ \xi(\gamma) } K \gamma = \xi(\gamma) \gamma, \quad \gamma \in \Lambda.
\end{equation}
As a result, the elements of $\RKHS_\lambda$ can be explicitly characterized as
\[ \RKHS_\lambda = \left\{  f = \sum_{\gamma\in \Lambda} \hat f_\gamma \gamma = \sum_{\gamma\in\Lambda} \hat f_\gamma \psi_{\gamma}/ \xi(\gamma): \; \sum_{\gamma\in \Lambda} \lvert \hat f_\gamma \rvert^2 / \lambda(\gamma) < \infty \right\}. \]

In the above, the coefficients $ \hat f_\gamma$ coincide with the values of the Fourier transform of the continuous function $ f \in \RKHS_\lambda \subseteq C(G) $ on $ \Lambda$, i.e., $ \hat f_\gamma = \mathcal F f(\gamma)$. Moreover, the condition $\sum_{\gamma\in\Lambda} \lvert \hat f_\gamma \rvert^2 / \lambda(\gamma) < \infty$ is equivalent to the statement that the function $ \hat u : \hat G \to \mathbb C$ with 
\begin{displaymath}
    \hat u( \gamma ) = 
    \begin{cases}
        \hat f_\gamma / \xi(\gamma), & \gamma \in \Lambda,\\
        0, & \text{otherwise},
    \end{cases}
\end{displaymath}
lies in $L^2(\hat G) $. Together, these facts imply:

\begin{lemma} \label{lem:U}
	The following statements are equivalent:
    \begin{enumerate}[(i)]
        \item $ f $ is an element of $ \RKHS_\lambda $. 
        \item There exists $ \hat u \in L^2(\hat G)$ with $ \lVert f \rVert_{\mathcal H_\lambda} = \lVert \hat u \rVert_{L^2(\hat G)}$  such that $ \mathcal F f = \xi \hat u$.
    \end{enumerate}
    Moreover, $ \hat u $ is unique, and can be explicitly constructed as $ \hat u = \xi^+ \mathcal F f $, where 
    \begin{displaymath}
    \xi^+(\gamma) = 
    \begin{cases}
        1/ \xi(\gamma), & \gamma \in \Lambda, \\
        0, & \gamma \in \hat G \setminus \Lambda.
    \end{cases}
\end{displaymath}
\end{lemma}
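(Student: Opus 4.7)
The plan is to read off Lemma~\ref{lem:U} directly from the Mercer basis representation of $\mathcal{H}_\lambda$ that was assembled in the preceding paragraphs. From the orthonormal basis $\{\psi_\gamma/\xi(\gamma)\cdot\xi(\gamma)\}_{\gamma\in\Lambda}=\{\psi_\gamma\}_{\gamma\in\Lambda}$ of $\mathcal{H}_\lambda$, where $\psi_\gamma=\xi(\gamma)\gamma$, any $f\in\mathcal{H}_\lambda$ expands as $f=\sum_{\gamma\in\Lambda}(\hat f_\gamma/\xi(\gamma))\psi_\gamma$ with $\|f\|_{\mathcal{H}_\lambda}^2=\sum_{\gamma\in\Lambda}|\hat f_\gamma|^2/\lambda(\gamma)$, and the Fourier transform of $f\in\mathcal{H}_\lambda\subset C(G)\subset L^2(G)$ satisfies $\mathcal{F}f(\gamma)=\hat f_\gamma$ for $\gamma\in\Lambda$ and $\mathcal{F}f(\gamma)=0$ for $\gamma\in\hat G\setminus\Lambda$ (since the ONB is indexed by $\Lambda$). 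This is the only substantive input; everything else is algebraic bookkeeping using the functions $\xi$ and $\xi^+$.

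For (i) $\Rightarrow$ (ii), I would set $\hat u:=\xi^+\mathcal{F}f$. Using $\xi^+(\gamma)=1/\xi(\gamma)$ on $\Lambda$ and $\mathcal{F}f$ vanishing off $\Lambda$, a direct computation gives
\begin{displaymath}
\|\hat u\|_{L^2(\hat G)}^2=\sum_{\gamma\in\Lambda}\frac{|\mathcal{F}f(\gamma)|^2}{\lambda(\gamma)}=\|f\|_{\mathcal{H}_\lambda}^2,
\end{displaymath}
so $\hat u\in L^2(\hat G)$, and $\xi\hat u=\xi\xi^+\mathcal{F}f=\mathbf{1}_\Lambda\mathcal{F}f=\mathcal{F}f$. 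Conversely, for (ii) $\Rightarrow$ (i), given $\hat u\in L^2(\hat G)$ with $\mathcal{F}f=\xi\hat u$, the factor $\xi$ forces $\mathcal{F}f$ to be supported on $\Lambda$, and the coefficients $\hat f_\gamma=\xi(\gamma)\hat u(\gamma)$ satisfy $\sum_{\gamma\in\Lambda}|\hat f_\gamma|^2/\lambda(\gamma)=\sum_{\gamma\in\Lambda}|\hat u(\gamma)|^2\le\|\hat u\|_{L^2(\hat G)}^2<\infty$, placing $f$ in $\mathcal{H}_\lambda$ by the characterization displayed above the lemma.

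For uniqueness and the explicit formula $\hat u=\xi^+\mathcal{F}f$, I would observe that the equation $\mathcal{F}f=\xi\hat u$ already determines $\hat u(\gamma)$ uniquely on $\Lambda$, namely $\hat u(\gamma)=\mathcal{F}f(\gamma)/\xi(\gamma)$, while leaving the values on $\hat G\setminus\Lambda$ (where $\xi$ vanishes) undetermined. The norm equality $\|\hat u\|_{L^2(\hat G)}=\|f\|_{\mathcal{H}_\lambda}$ combined with the identity $\|f\|_{\mathcal{H}_\lambda}^2=\sum_{\gamma\in\Lambda}|\hat u(\gamma)|^2$ just established pins down $\hat u(\gamma)=0$ off $\Lambda$, yielding $\hat u=\xi^+\mathcal{F}f$ and hence uniqueness. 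There is no substantive obstacle; the only point requiring care is to keep track of the fact that elements of $\mathcal{H}_\lambda$ have Fourier support in $\Lambda$, which is what ensures the norm identity (as opposed to a mere inequality) and makes $\xi^+$ a genuine left inverse of multiplication by $\xi$ on these functions.
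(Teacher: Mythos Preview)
Your proposal is correct and follows essentially the same route as the paper, which does not give a formal proof of the lemma but simply states it as a direct consequence of the preceding Mercer-basis characterization of $\mathcal H_\lambda$ and the explicit description of $\hat u$ displayed immediately before the lemma. You have just filled in the bookkeeping details, including the observation that the norm equality forces $\hat u$ to vanish off $\Lambda$, which the paper leaves implicit.
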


\section{\label{secSubconv}Reproducing kernel Hilbert algebras from subconvolutive functions}

Unless otherwise stated, throughout this section we will assume that $G$ is compact. One of our main results is the following.

\begin{theorem}
    \label{thmMain}
    Suppose that $\lambda \in L^1(\hat G)$ is: 
    \begin{enumerate}[(i)]
        \item Strictly positive-valued, $\lambda(\gamma) > 0 $;
        \item Subconvolutive, $( \lambda * \lambda )(\gamma ) \leq C \lambda(\gamma)$;
        \item Symmetric, $\lambda(-\gamma) = \lambda(\gamma)$. 
    \end{enumerate}
    Then, $\mathcal H_\lambda$ is a unital, symmetric Banach $^*$-algebra with respect to pointwise multiplication and complex conjugation of functions, and lies dense in $C(G)$. 
\end{theorem}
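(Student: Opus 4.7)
My plan is to pass to the Fourier picture on the discrete dual group $\hat G$ via Lemma~\ref{lem:U}: writing $\xi = \sqrt{\lambda}$ and $\hat u_f := \xi^+ \mathcal F f$, each $f \in \mathcal H_\lambda$ corresponds uniquely to $\hat u_f \in L^2(\hat G)$ with $\mathcal F f = \xi \hat u_f$ and $\lVert f \rVert_{\mathcal H_\lambda} = \lVert \hat u_f \rVert_{L^2(\hat G)}$. Under this identification, pointwise multiplication on $G$ becomes convolution on $\hat G$, complex conjugation becomes the involution $\gamma \mapsto \overline{(\cdot)(-\gamma)}$, and the three hypotheses on $\lambda$ can be used one at a time.

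For the Banach algebra bound, Cauchy--Schwarz applied to
\begin{displaymath}
    \mathcal F(fg)(\gamma) = (\mathcal F f * \mathcal F g)(\gamma) = \sum_{\gamma' \in \hat G} \xi(\gamma')\xi(\gamma - \gamma')\, \hat u_f(\gamma')\hat u_g(\gamma - \gamma')
\end{displaymath}
gives $\lvert \mathcal F(fg)(\gamma) \rvert^2 \leq (\lambda * \lambda)(\gamma)\, (\lvert \hat u_f \rvert^2 * \lvert \hat u_g \rvert^2)(\gamma)$. Subconvolutivity (ii) replaces $(\lambda * \lambda)(\gamma)$ by $C \lambda(\gamma)$, so dividing by $\lambda(\gamma)$ and summing over $\gamma$ yields $\lVert fg \rVert_{\mathcal H_\lambda}^2 \leq C \lVert f \rVert_{\mathcal H_\lambda}^2 \lVert g \rVert_{\mathcal H_\lambda}^2$. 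For the involution, the identity $\mathcal F(\bar f)(\gamma) = \overline{\mathcal F f(-\gamma)}$ combined with (iii) (which, together with (i), forces $\lambda(-\gamma) = \lambda(\gamma)$) gives $\lVert \bar f \rVert_{\mathcal H_\lambda} = \lVert f \rVert_{\mathcal H_\lambda}$ by change of variable. The constant $1_G$ is the trivial character $0_{\hat G}$, and strict positivity (i) ensures $1_G \in \mathcal H_\lambda$ with $\lVert 1_G \rVert_{\mathcal H_\lambda}^2 = 1/\lambda(0_{\hat G})$, so $\mathcal H_\lambda$ is unital. The same strict positivity makes every Mercer eigenvalue of $\mathcal K$ positive, and Lemma~\ref{lem:Mercer}(vii) then yields $C$-universality of $k$ and hence density of $\mathcal H_\lambda$ in $C(G)$.

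The genuinely hard part is symmetry. By the Shirali--Ford characterization, it suffices to show that every self-adjoint $f \in \mathcal H_\lambda$ has $\sigma_\lambda(f) \subset \mathbb R$; equivalently, that $\mathcal H_\lambda$ is \emph{inverse-closed} in $C(G)$: whenever $f \in \mathcal H_\lambda$ is nowhere zero on $G$, the pointwise reciprocal $1/f$ should lie in $\mathcal H_\lambda$. The continuous inclusion $\mathcal H_\lambda \hookrightarrow C(G)$ (a reproducing-kernel estimate) gives $\sigma_{C(G)}(f) \subseteq \sigma_\lambda(f)$ for free, but the reverse inclusion is the obstacle. The cleanest route I see is to show that every multiplicative linear functional on $\mathcal H_\lambda$ is automatically continuous in the uniform norm and therefore extends to a character of the $C^*$-algebra $C(G)$, i.e.\ a point evaluation $\delta_x$. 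Since each $\delta_x$ is Hermitian, the Gelfand transform of $\mathcal H_\lambda$ then coincides with the inclusion into $C(G)$ and intertwines $^*$ with complex conjugation, forcing $\sigma_\lambda(f^* f) = (\lvert f \rvert^2)(G) \subseteq [0,\infty)$. The technical heart of the argument is extracting this uniform-norm control on characters from the subconvolutive and self-adjoint structure of $\lambda$; this is where I expect the proof to require real work, and where the Mercer eigenbasis $\{ \psi_\gamma = \xi(\gamma)\gamma \}_{\gamma \in \hat G}$ should play a decisive role by letting one approximate $1/f$ through finite character expansions controlled in the $\mathcal H_\lambda$-norm.
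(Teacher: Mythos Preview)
Your plan for the Banach-algebra inequality, the isometric involution, unitality, and density in $C(G)$ matches the paper's proof almost line for line: the same Fourier-side Cauchy--Schwarz splitting of $(\xi\hat u_f)*(\xi\hat u_g)$, the same use of $\lambda(-\gamma)=\lambda(\gamma)$ for the involution, and the same appeal to Lemma~\ref{lem:Mercer}(vii) for density. (Your computation $\lVert 1_G\rVert_{\mathcal H_\lambda}^2 = 1/\lambda(0_{\hat G})$ is in fact more careful than the paper, which tacitly writes $\psi_0=1_G$.)

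For symmetry you have correctly identified the target---show that every character of $\mathcal H_\lambda$ is a point evaluation $\delta_x$, so that $\sigma_\lambda(f)=f(G)$---but your proposed mechanism (uniform-norm continuity of characters, or approximating $1/f$ by finite character sums controlled in $\mathcal H_\lambda$-norm) is left as a hope rather than an argument, and it is not the route the paper takes. The paper's device is an \emph{orthogonality transfer}. First, a short Neumann-series argument shows that any maximal ideal $I\subset\mathcal H_\lambda$ is $\mathcal H_\lambda$-orthogonal to $1_G$ (Lemma~\ref{lemIdeal}). Because $1_G$ is an eigenfunction of the kernel integral operator $K$ and $K^*$ coincides with the inclusion $\mathcal H_\lambda\hookrightarrow L^2(G)$, this orthogonality passes down to $L^2(G)$, yielding $\lVert 1_G - f\rVert_{C(G)}\geq\lVert 1_G - f\rVert_{L^2(G)}\geq 1$ for every $f\in I$. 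On the other hand, if $I=\ker\psi$ for some character $\psi$ not of the form $\delta_x$, a brief ideal-closure argument shows that $I$ is dense in $C(G)$, a contradiction. Thus subconvolutivity plays no further role in the symmetry step; what is actually used is the Mercer relation $K^*=\iota$ together with the fact that $1_G$ is a Mercer eigenfunction. Your instinct that the eigenbasis is decisive is correct, but it enters through this $\mathcal H_\lambda\to L^2$ orthogonality transfer rather than through any direct inverse-closedness or approximation argument.
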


\begin{proof}
    See Section~\ref{secProofMain}.\phantom\qedhere
\end{proof}

\begin{remark}
    The fact that  under Assumption~(ii) of Theorem~\ref{thmMain} $\mathcal H_\lambda $ is a Banach algebra can be readily deduced from \cite{Kuznetsova06}*{Theorem~1}, which came to our attention after completion of this work. In particular, \cite{Kuznetsova06}*{Theorem~1} establishes that under Assumption~(ii) $L^2_w(\hat G)$ with $ w=\lambda^{-1/2} $ is a convolution algebra, and taking Fourier transforms yields the Banach algebra property of $\mathcal H_\lambda$ under pointwise multiplication. See also \cites{Nikolskii74,KermanSawyer94,KuznetsovaMolitorBraun12} for related work. In Section~\ref{secProofMain}, we include a proof tailored to the Hilbert space setting, which uses Cauchy-Schwarz inequalities and the representation of convolution as an $L^2(\hat G)$ inner product (rather than, e.g.,  H\"older inequalities employed in \cite{KuznetsovaMolitorBraun12}).
\end{remark}

As mentioned in Section~\ref{secRelated}, a useful way of constructing subconvolutive functions on $\hat G$ through positive-valued functions $ \lambda \in L^1(\hat G)$ with subadditive inverses,
\begin{displaymath}
    \lambda^{-1}(\gamma + \gamma') \leq C ( \lambda^{-1}(\gamma) + \lambda^{-1}(\gamma')).  
\end{displaymath}
By \cite{Feichtinger79}*{Corollary~3.8}, every such function $\lambda$ is subconvolutive. If, in addition, $\lambda$ is strictly positive-valued and symmetric, then by Theorem~\ref{thmMain} $\mathcal H_\lambda$ is an RKHA. The subexponential weights $\lambda^{-1}$ in~\eqref{eqSubexp} also satisfy the assumptions of Theorem~\ref{thmMain}, but in this case $\lambda^{-1}$ is not subadditive. Another example mentioned in Section~\ref{secRelated} is that of an LCA $G$ (not necessarily compact) with a function $\lambda \in L^1(\hat G)$ such that $\lambda^{-1/2}$ is subadditive. In that case, too, $\mathcal H_\lambda$ is a Banach algebra with respect to pointwise multiplication \cite{FeichtingerEtAl07}. To make contact with that result in the compact case, we note the following fact.  

\begin{lemma}
    Suppose that $G$ is compact and $ \xi \in L^2(\hat G ) $ is positive-valued, self-adjoint with respect to convolution ($\xi^\star = \xi)$, and subconvolutive. Then, $ \lambda = \xi^2 \in L^1(\hat G)$ is also subconvolutive. 
    \label{lemSubconv}
\end{lemma}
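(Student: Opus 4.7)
The plan is to exploit the discreteness of $\hat G$ (since $G$ is compact) to turn the pointwise subconvolutivity inequality for $\xi$ into a uniform pointwise bound on individual summands, and then to estimate the convolution of $\lambda = \xi^2$ by the standard $\ell^2 \leq \ell^1 \cdot \ell^\infty$ inequality. First I would note that $\lambda \in L^1(\hat G)$ is automatic since $\xi \in L^2(\hat G)$, so only the subconvolutivity inequality for $\lambda$ needs proof.

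The key observation is that on compact $G$, the dual measure $\hat\mu$ is a weighted counting measure with some uniform weight $c > 0$ on each singleton (as $\hat G$ is discrete and $\hat\mu$ is Haar). Since $\xi$ is positive-valued, for any fixed $\gamma, \gamma' \in \hat G$ the single term $\xi(\gamma-\gamma')\xi(\gamma')$ is bounded above by the full integral $\int \xi(\gamma-\eta)\xi(\eta) \, d\hat\mu(\eta)/c = (\xi*\xi)(\gamma)/c$, which by hypothesis is at most $(C/c)\xi(\gamma)$. This yields the uniform bound
\begin{displaymath}
    \xi(\gamma-\gamma')\xi(\gamma') \leq C'\xi(\gamma), \quad \forall \gamma,\gamma' \in \hat G,
\end{displaymath}
with $C' = C/c$.

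Next, I would write out
\begin{displaymath}
    (\lambda * \lambda)(\gamma) = \int_{\hat G} \xi(\gamma-\gamma')^2 \xi(\gamma')^2 \, d\hat\mu(\gamma') = \int_{\hat G} h(\gamma')^2 \, d\hat\mu(\gamma'),
\end{displaymath}
with $h(\gamma') := \xi(\gamma-\gamma')\xi(\gamma') \geq 0$. Using the elementary inequality $\int h^2 \leq \lVert h \rVert_\infty \int h$ (valid since $h \geq 0$), I would bound $\lVert h \rVert_\infty \leq C'\xi(\gamma)$ by the previous step and $\int h \, d\hat\mu = (\xi * \xi)(\gamma) \leq C\xi(\gamma)$ by hypothesis, giving
\begin{displaymath}
    (\lambda * \lambda)(\gamma) \leq C'\xi(\gamma) \cdot C\xi(\gamma) = C C' \lambda(\gamma),
\end{displaymath}
as required. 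The argument is essentially routine once the pointwise bound on individual summands is in hand; the main (minor) subtlety is invoking the discreteness of $\hat G$ to justify that step, which is why the hypothesis $G$ compact is essential. Note that the self-adjointness hypothesis $\xi^\star = \xi$ is not used for subconvolutivity of $\lambda$ itself; it is carried along so that the resulting $\lambda$ also satisfies $\lambda^\star = \lambda$, making it eligible for Theorem~\ref{thmMain}.
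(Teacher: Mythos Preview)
Your proof is correct. Both your argument and the paper's hinge on the discreteness of $\hat G$, but they package the inequality differently. You first extract the pointwise bound $\xi(\gamma-\gamma')\xi(\gamma')\leq C'\xi(\gamma)$ from the convolution sum and then apply $\int h^2 \leq \lVert h\rVert_\infty \int h$, arriving at the constant $CC'$. The paper instead writes $(\lambda*\lambda)(\gamma)=\lVert \xi\,S^\gamma\xi\rVert_{L^2(\hat G)}^2$ and invokes the sequence-space inclusion $\lVert\cdot\rVert_{L^2(\hat G)}\leq\lVert\cdot\rVert_{L^1(\hat G)}$ (valid because $\hat\mu$ is counting measure) to get $\lVert \xi\,S^\gamma\xi\rVert_{L^2}^2 \leq \lVert \xi\,S^\gamma\xi\rVert_{L^1}^2 = (\xi*\xi(\gamma))^2 \leq C^2\lambda(\gamma)$, yielding the sharper constant $C^2$ in one stroke and without isolating a pointwise bound. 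Your observation that $\xi^\star=\xi$ is not needed for the subconvolutivity of $\lambda$ is accurate; the paper's write-up uses it only to identify $(\lambda*\lambda)(\gamma)$ with $\langle\lambda,S^\gamma\lambda\rangle_{L^2(\hat G)}$ rather than $\langle\lambda^\star,S^\gamma\lambda\rangle_{L^2(\hat G)}$, which is a cosmetic convenience.
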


\begin{proof}
    Since $G $ is compact, we have $L^1(\hat G ) \subseteq L^2(\hat G)$ and $\lVert \cdot \rVert_{L^2(\hat G)} \leq \lVert \cdot \rVert_{L^1(\hat G)}$ (since the dual measure $\hat \mu $ is a normalized counting measure). Thus, using~\eqref{eqConvShift} and the facts that $\xi^\star= \xi$ and $\xi * \xi \leq C \xi$, we get
    \begin{align*}
        \lambda * \lambda(\gamma) &= \langle \lambda, S^\gamma \lambda \rangle_{L^2(\hat G)} = \langle \xi^2, S^\gamma \xi^2 \rangle_{L^2(\hat G)} = \langle \xi S^\gamma \xi, \xi S^\gamma \xi \rangle_{L^2(\hat G)} = \lVert \xi S^\gamma \xi \rVert_{L^2(\hat G)}^2 \\
        &\leq \lVert \xi S^\gamma \xi \rVert_{L^1(\hat G)}^2  = \langle \xi, S^\gamma \xi \rangle_{L^2(\hat G)}^2 = ( \xi * \xi(\gamma) )^2 \\
        &\leq C^2 \xi^2(\gamma) = C^2 \lambda(\gamma). \qedhere
    \end{align*}
\end{proof}

If $G$ is compact and $\xi \in L^1(\hat G) \subseteq L^2(\hat G)$ has a subadditive inverse, then by \cite{Feichtinger79}*{Corollary~3.9}, $ \xi$ is subconvolutive. As a result, by Lemma~\ref{lemSubconv}, $ \lambda = \xi^2 $ is also subconvolutive.  Thus, under the additional constraint $\lambda^{1/2} \in L^1(\hat G)$, the subadditivity assumption on $\lambda^{-1/2}$ underlying the construction of the Banach algebra $\mathcal H_\lambda$ in \cite{FeichtingerEtAl07} implies the subconvolutivity assumption in Theorem~\ref{thmMain}. Note that the subexponential weights in~\eqref{eqSubexp} satisfy $\lambda^{1/2} \in L^1(\hat G)$.

\subsection{\label{secProofMain}Proof of Theorem~\ref{thmMain}}

Consider two elements $f,g \in \RKHS_\lambda$. By Lemma~\ref{lem:U}, to show that the continuous function $fg$ lies in $ \RKHS_\lambda$ it is enough to show that the function $ \hat w : \hat G \to \mathbb C$ defined as $ \hat w = \xi^+ \mathcal F(fg)$ lies in $L^2(\hat G)$. To that end, letting $ \hat u = \xi^+ \mathcal F f $ and $ \hat v = \xi^+ \mathcal F g $ be the $L^2(\hat G)$ representatives of $ f $ and $ g$ from Lemma~\ref{lem:U}, we obtain
\begin{displaymath}
    \mathcal F(fg)(\gamma) = ( \mathcal F f * \mathcal F g )(\gamma) = ( ( \xi \hat u ) *  ( \xi \hat v ) )( \gamma ) = \langle ( \xi \hat u )^{\star}, \hat S^\gamma( \xi \hat v) \rangle_{L^2(\hat G)}.
\end{displaymath}
Then, using standard properties of shift operators and $L^2$ inner products, as well as the fact that $ \xi $ is real and $L^1(\hat G)$-self-adjoint, we get
\begin{align*}
    \lvert \mathcal F(fg)(\gamma) \rvert^2 &= \lvert \langle (  \xi \hat u )^{\star}, \hat S^\gamma( \xi \hat v) \rangle_{L^2(\hat G)} \rvert^2 = \lvert \langle \xi \hat u^\star, ( \hat S^\gamma \xi) ( \hat S^\gamma \hat  v) \rangle_{L^2(\hat G)} \rvert^2 \\
    &= \lvert \langle \xi S^\gamma \xi, \overline{\hat u^*} \hat S^\gamma \hat v \rangle_{L^2(\hat G)} \rvert^2 \leq  \langle \xi S^\gamma \xi,  \xi S^\gamma \xi \rangle_{L^2(\hat G)} \langle\overline{\hat u^*} \hat S^\gamma v, \overline{\hat u^*} \hat S^\gamma v \rangle_{L^2(\hat G)} \\
    &= \langle \lambda, \hat S^\gamma \lambda \rangle_{L^2(\hat G)} \langle \lvert \hat u^\star \rvert^2, S^\gamma \lvert \hat v \rvert^2 \rangle_{L^2(\hat G)} = [ ( \lambda* \lambda)(\gamma) ] [ ( \lvert \hat u^\star \rvert^2 * \lvert \hat v \rvert^2 )( \gamma) ] \\
    & \leq C \lambda(\gamma) [ ( \lvert \hat u^\star \rvert^2 * \lvert \hat v \rvert^2 )( \gamma) ],
\end{align*}
where we used the subconvolutivity of $\lambda$ to arrive at the last line. Thus, since $ \lambda$ is strictly positive-valued, we have $ \xi^+(\gamma) = 1 /  \sqrt{\lambda}(\gamma)$, and $ \lvert \hat w(\gamma) \rvert^2 \leq C ( \lvert \hat u^\star \rvert^2 * \lvert \hat v \rvert^2 )( \gamma) $. Therefore, 
\begin{displaymath}
    \lVert \hat w \rVert_{L^2(\hat G)}^2 \leq C \lVert \lvert \hat u^\star \rvert^2 * \lvert \hat v \rvert^2 \rVert_{L^1(\hat G)}, 
\end{displaymath}
and it follows that $\hat  w$ lies in $L^2(\hat G)$ since $\lvert \hat u^\star \rvert^2 * \lvert \hat v \rvert^2 $ is the convolution of the $L^1(\hat G)$ elements $ \lvert \hat u^\star \rvert^2 $ and $ \lvert \hat v \rvert^2 $. 

We thus conclude that $\mathcal H_\lambda$ is a Banach algebra with respect to pointwise function multiplication. The fact that $\mathcal H_\lambda $ is a dense subspace of $C(G)$ follows from the strict positivity of $\lambda$ in conjunction with Lemma~\ref{lem:Mercer}(vii). 

Next, we verify that $f^* (x) = \overline{f(x)}$ is an isometric, antilinear involution on $\RKHS_\lambda$. Since $ \lambda(\gamma) = \lambda(-\gamma)$ for every $ \gamma \in \hat G $, the orthonormal basis elements $\psi_{\gamma}$ from~\eqref{eqn:def:psi} satisfy 
\begin{displaymath}
    \overline{ \psi_{\gamma}(x) } = \overline{\lambda^{1/2}(\gamma) \gamma(x)} = \lambda^{1/2}(-\gamma) \overline{\gamma(x)} = \lambda^{1/2}(-\gamma) \gamma^{-1}(x) = \psi_{-\gamma}(x),
\end{displaymath}
so $ \lVert \psi^*_{\gamma} \rVert = \lVert \psi_{-\gamma} \rVert_{\mathcal H_\lambda} = 1$. Therefore, $^*$ preserves the norm of orthonormal basis vectors of $ \RKHS_\lambda$. Moreover, it is clearly antilinear and involutive, so $\mathcal H_\lambda$ is a Banach $^*$-algebra satisfying~\eqref{eqRKHA}. 

The RKHA $\RKHS_\lambda$ is also unital and satisfies $ \lVert 1_G \rVert_{\mathcal H_\lambda} = 1$ since the unit basis vector $\psi_0$ is equal to the trivial character in $ \hat G $, and thus everywhere equal to 1 on $ G$. 

Finally, the symmetry of $\mathcal H_\lambda$ follows from Corollary~\ref{corSpec2} in Section~\ref{sec:spec} below. \qed

\section{Spectra and states of reproducing kernel Hilbert algebras} \label{sec:spec}

In general, an RKHA $ \mathcal H_\lambda$ on a compact abelian group $G$ does not satisfy the $C^*$ identity, $ \lVert f^* f \rVert_{C(G)} = \lVert f \rVert^2_{C(G)}$, holding for the $C^*$-algebra of continuous functions on $G$ under pointwise multiplication and complex conjugation, nor does it satisfy the $H^*$-identity in~\eqref{eqHStar} enjoyed by the $L^2(G)$ convolution algebra. Failure to meet, in particular, the last property means that the regular representation of $  \mathcal H_\lambda $ into $  B(\mathcal H_\lambda)$ is not a $^*$-representation. 

Yet, by virtue of their RKHS structure, RKHAs possess continuous evaluation functionals $ \delta_x : \mathcal H_\lambda \to \mathbb C$ at every $x \in G $, 
\begin{equation}
    \label{eqRep}
    \delta_x f = f(x) = \langle k( x, \cdot ), f \rangle_{\mathcal H_\lambda}, \quad \lVert \delta_x \rVert_{\mathcal H'_\lambda} = \sqrt{k(x,x)},
\end{equation}
satisfying 
\begin{displaymath}
    \delta_x(fg) = (\delta_x f ) ( \delta_x g ), \quad \delta_x f^* = \overline{\delta_x f}, \quad \forall f, g \in \mathcal H_\lambda, 
\end{displaymath}
where $ \lVert \cdot \rVert_{\mathcal H'_\lambda}$ is the operator norm of functionals in the dual space $ \mathcal H'_\lambda$. Every nonzero evaluation functional $\delta_x $ is an element of the spectrum of $\mathcal H_\lambda$, i.e., the set of nonzero homomorphisms of $\mathcal H_\lambda$ into $ \mathbb C $, denoted by $ \sigma( \mathcal H_\lambda)$. In addition, as we will see below, under appropriate conditions on the kernel, the $\delta_x $ provide an abundance of states on $\mathcal H_\lambda$, and also induce a set of states on the non-abelian $C^*$-algebra $B(\mathcal H_\lambda)$.

Recall now that for a compact Hausdorff space $G$, the spectrum of the $C^*$-algebra $C(G)$ consists precisely of the evaluation functionals $ \delta_x $ at every $ x \in G $ \cite{Dixmier1977}. Moreover, the map $ \beta : G \to \sigma(C(G))$ with $ \beta(x) = \delta_x $ and the Gelfand transform $ \Gamma : C(G) \to C(\sigma(C(G))) $ with $ ( \Gamma f )(\delta_x) = f(x)$ are homeomorphisms with respect to the weak-$^*$ topology of $\sigma(C(G))$. 

The following theorem characterizes analogously the spectra of RKHAs on compact abelian groups and the associated Gelfand transforms. 

\begin{theorem} \label{thmSpec}
    Let $\mathcal H_\lambda$ be an RKHA on a compact abelian group associated with a strictly positive function $\lambda \in L^1(\hat G)$. Then, the following hold.
    \begin{enumerate}[(i)]
        \item The map $ \beta_\lambda : G \to \sigma(\mathcal H_\lambda)$ with $ \beta_\lambda(x) = \delta_x$ is a homeomorphism  with respect to the weak-$^*$ topology on $\sigma(\RKHS_\lambda)$ inherited as a subset of $\RKHS'_\lambda$.
        \item Under the identification $ G \simeq \sigma(C(G))$ induced by $\beta$, the Gelfand transform $ \Gamma_\lambda : \mathcal H_\lambda \to C(\sigma(\mathcal H_\lambda))$ with $ ( \Gamma_\lambda f )(\delta_x) = f(x) $ coincides with the inclusion map $ \iota : \mathcal H_\lambda \hookrightarrow C(G) $. In particular, the operator norm of $ \Gamma_\lambda$ is equal to $ \sqrt{l(0_G)}$. 
    \end{enumerate} 
\end{theorem}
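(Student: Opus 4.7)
Part (ii) is essentially tautological once (i) is in hand. By definition $(\Gamma_\lambda f)(\chi) = \chi(f)$, so $(\Gamma_\lambda f)(\delta_x) = f(x)$ by the reproducing property. The identification $\sigma(\mathcal H_\lambda) \simeq G$ via $\beta_\lambda$ then makes $\Gamma_\lambda$ literally the inclusion $\iota : \mathcal H_\lambda \hookrightarrow C(G)$; its operator norm is $\sup_{x \in G} \lVert \delta_x \rVert_{\mathcal H'_\lambda} = \sup_{x \in G} \sqrt{k(x,x)} = \sqrt{l(0_G)}$ by translation-invariance of $k$. So all the content lies in (i).

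My plan for (i) is to verify that $\beta_\lambda$ is a continuous bijection from the compact space $G$ onto the Hausdorff space $\sigma(\mathcal H_\lambda)$, so it is automatically a homeomorphism. Three of the four properties are routine. First, each $\delta_x$ is a non-zero continuous character of $\mathcal H_\lambda$: continuity is the RKHS evaluation bound, multiplicativity is $\delta_x(fg) = f(x)g(x)$, and $\delta_x(1_G) = 1$ (noting $1_G \in \mathcal H_\lambda$, forced by $\lambda(0_{\hat G})>0$). Second, if $\delta_x = \delta_y$, then $\gamma(x) = \gamma(y)$ for every $\gamma \in \hat G \subset \mathcal H_\lambda$, so $x=y$ by Pontryagin duality on compact $G$. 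Third, weak-$^*$ continuity of $\beta_\lambda$ is the statement that $x \mapsto \delta_x(f) = f(x)$ is continuous for each $f \in \mathcal H_\lambda \subseteq C(G)$, which is immediate from Lemma~\ref{lem:LCA}(i).

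The substantial step is surjectivity. Given $\chi \in \sigma(\mathcal H_\lambda)$, define $c : \hat G \to \mathbb C$ by $c(\gamma) := \chi(\gamma)$, which makes sense since $\lambda(\gamma) > 0$ forces every character to lie in $\mathcal H_\lambda$. Multiplicativity of $\chi$ and the identity $\gamma \cdot \gamma' = \gamma + \gamma'$ (pointwise product of characters equals addition on $\hat G$) make $c$ a homomorphism $\hat G \to \mathbb C^\times$ with $c(0_{\hat G}) = 1$. If one can show $|c(\gamma)| = 1$ for all $\gamma$, then $c \in \widehat{\hat G}$ and Pontryagin duality gives $x \in G$ with $c(\gamma) = \gamma(x)$ for all $\gamma$. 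Continuity of $\chi$ and density of trigonometric polynomials in $\mathcal H_\lambda$ then yield $\chi(f) = \sum_\gamma \hat f(\gamma) c(\gamma) = f(x) = \delta_x(f)$ for every $f$, completing surjectivity.

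The main obstacle is therefore the bound $|c(\gamma)| = 1$. Riesz representation in $\mathcal H_\lambda$ gives $\chi(f) = \langle h_\chi, f \rangle_{\mathcal H_\lambda}$ with $h_\chi$ having Fourier coefficients $\overline{c(\gamma)} \lambda(\gamma)$ and $\lVert h_\chi \rVert^2_{\mathcal H_\lambda} = \sum_\gamma \lambda(\gamma) |c(\gamma)|^2 < \infty$. Combined with the identities $c(-\gamma) = 1/c(\gamma)$ (from $\gamma \cdot (-\gamma) = 1_G$ and $\chi(1_G) = 1$) and $\lambda(-\gamma) = \lambda(\gamma)$ (from the isometric involution $\gamma^* = -\gamma$), together with the supermultiplicativity $\lambda(\gamma)\lambda(\gamma') \leq C^2 \lambda(\gamma + \gamma')$ (which is the RKHA norm bound $\lVert \gamma \gamma' \rVert \leq C \lVert \gamma \rVert \lVert \gamma' \rVert$ read on basis characters $\lVert \gamma \rVert_{\mathcal H_\lambda} = 1/\sqrt{\lambda(\gamma)}$), the plan is to rule out $|c(\gamma)| \ne 1$ by summation along the cyclic subgroup $\langle \gamma \rangle$ — effectively extracting a Gelfand--Raikov--Shilov type condition $\lim_n \lambda(n\gamma)^{1/n} = 1$ from the RKHA structure. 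Executing this last step cleanly is the crux of the proof; once it is done, (i) and (ii) both follow.
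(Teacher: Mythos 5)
Your continuity, injectivity, and part~(ii) arguments are fine, and your reduction of surjectivity to the single statement $\lvert c(\gamma)\rvert = 1$ is correct: since $\lVert \gamma^n \rVert_{\mathcal H_\lambda} = \lambda(n\gamma)^{-1/2}$, this is exactly the Gelfand--Raikov--Shilov condition $\lim_n \lambda(n\gamma)^{1/n} = 1$, i.e., that every character has spectral radius $1$ in $\mathcal H_\lambda$. But that is precisely where the whole content of the theorem sits, and you have not proved it --- moreover, the ingredients you list are demonstrably insufficient to prove it. Take $\hat G = \mathbb Z$ and $\lambda(n) = e^{-\lvert n \rvert}$: this is strictly positive, summable, symmetric ($\lambda(-n) = \lambda(n)$), and supermultiplicative ($\lambda(m)\lambda(n) \le \lambda(m+n)$ since $\lvert m+n\rvert \le \lvert m\rvert + \lvert n\rvert$); yet the non-unimodular homomorphism $c(n) = \rho^n$ with $1 < \rho < e^{1/2}$ satisfies $c(-n) = 1/c(n)$ and $\sum_n \lambda(n)\lvert c(n)\rvert^2 < \infty$, so the Riesz-representative summability, the inversion identity, the symmetry of $\lambda$, and supermultiplicativity are all simultaneously compatible with $\lvert c(\gamma)\rvert \neq 1$. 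Fekete's lemma applied to the supermultiplicative sequence $\lambda(n\gamma)/C^2$ only yields that the limit $L = \lim_n \lambda(n\gamma)^{1/n}$ exists in $(0,1]$, not that $L = 1$. (This $\lambda$ is of course not subconvolutive and $\mathcal H_\lambda$ is then not an RKHA --- cf.\ Theorem~\ref{thmMarkov} --- which is exactly the point: the full Banach algebra inequality on all of $\mathcal H_\lambda$, not merely its restriction to the basis characters, has to enter the argument, and your plan never uses it beyond supermultiplicativity.) Nor does the isometric involution rescue the step: in a general Banach $^*$-algebra an element with $u^*u = 1_G$ need not have unimodular spectrum (weighted Wiener algebras $\ell^1_w(\mathbb Z)$ with exponential weights have annular spectrum for the shift); unitarity forces spectral radius one in $C^*$-algebras, not here.

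For contrast, the paper's proof avoids Pontryagin duality and the GRS condition entirely, and this is where its use of the Hilbert space structure is essential. Given $\psi \in \sigma(\mathcal H_\lambda)$ that is not an evaluation functional, it shows the maximal ideal $I = \ker \psi$ has $C(G)$-dense closure (the closure $\bar I$ is an ideal of $C(G)$ contained in no $\ker \delta_x$, hence equals $C(G)$); on the other hand, Lemma~\ref{lemIdeal} --- a renorming argument via the regular representation norm $\vertiii{\cdot}$, using that $\lVert v \rVert_{\mathcal H_\lambda} < 1 = \vertiii{1_G}$ would make $1_G - v$ invertible inside $I$ --- shows every maximal ideal is $\mathcal H_\lambda$-orthogonal to $1_G$. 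Feeding this orthogonality through the Mercer integral operator $K$, with $K 1_G = \lambda(0_{\hat G}) 1_G$ by Lemma~\ref{lem:LCA}(iii), gives $\langle 1_G, K^* f \rangle_{L^2(G)} = 0$ and hence $\lVert 1_G - f \rVert_{C(G)} \ge \lVert 1_G - K^* f \rVert_{L^2(G)} \ge 1$ for every $f \in I$, contradicting density. If you wish to salvage your route, you would need to extract $\lim_n \lambda(n\gamma)^{1/n} = 1$ from the global algebra inequality (equivalently from subconvolutivity-type information), which appears essentially as hard as the theorem itself; as it stands, the proposal's crux step is a genuine gap, and the named toolkit cannot close it.
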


\begin{proof}
    See Section~\ref{secThmSpec}.\phantom\qedhere
\end{proof}

Theorem~\ref{thmSpec} establishes that $\mathcal H_\lambda$ has the same spectrum as $C(G)$. Analogous results were found in the paper~\cite{Brandenburg1975} for convolution algebras on LCAs. In particular, a class of convolution algebras associated with subadditive weight functions was shown to have the same spectrum as the group convolution algebra $L^1(G)$. Theorem~\ref{thmSpec} addresses the case of algebras with respect to pointwise function multiplication that are simultaneously RKHSs. In particular, our method of proof in Section~\ref{secThmSpec} makes explicit use of the RKHSs structure of $\mathcal H_\lambda$.   

The following are corollaries of Theorem~\ref{thmSpec}.

\begin{corollary}\ 
    \begin{enumerate}[(i)]
        \item Every non-vanishing function $f \in \mathcal H_\lambda$ has a multiplicative inverse in $\mathcal H_\lambda$.
        \item Every strictly positive function $ f \in \mathcal H_\lambda$ has a square root in $\mathcal H_\lambda$, i.e., there exists a (strictly positive) $ g \in \mathcal H_\lambda$ such that $ f = g^2 $.
        \item The spectrum $\sigma_\lambda(f)$ of any $f\in \mathcal H_\lambda$ is equal to the range of $f$.
\end{enumerate}
    \label{corSpec1}
\end{corollary}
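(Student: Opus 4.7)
The plan is to derive all three parts directly from Theorem~\ref{thmSpec} by combining the identification of the spectrum $\sigma(\mathcal H_\lambda)$ with $G$ and the Gelfand transform with the inclusion $\mathcal H_\lambda \hookrightarrow C(G)$, together with standard machinery from commutative Banach algebra theory. First, I would note that $\mathcal H_\lambda$ is unital: strict positivity of $\lambda$ puts $0_{\hat G}$ into $\Lambda$, so the constant function $1_G$ lies in $\mathcal H_\lambda$, and the classical Gelfand theory applies.

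I would start with (iii), since (i) then follows as an immediate corollary. For any commutative unital Banach algebra, an element's spectrum equals the range of its Gelfand transform, i.e.\ $\sigma_\lambda(f) = \{\chi(f) : \chi \in \sigma(\mathcal H_\lambda)\}$. Theorem~\ref{thmSpec}(i) identifies each $\chi$ with some $\delta_x$, so $\chi(f) = f(x)$ and thus $\sigma_\lambda(f) = f(G)$. Part (i) is now one line: if $f$ is nowhere zero then $0 \notin f(G) = \sigma_\lambda(f)$, and so $f$ is invertible in $\mathcal H_\lambda$.

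For part (ii), I would invoke the holomorphic functional calculus for commutative unital Banach algebras. Because $G$ is compact and $f \in \mathcal H_\lambda \subseteq C(G)$, the set $\sigma_\lambda(f) = f(G)$ is a compact subset of $(0,\infty)$, so there is an open simply connected neighborhood $U$ of $f(G)$ contained in $\mathbb C \setminus (-\infty, 0]$ on which the principal branch of the square root $h(z) = \sqrt z$ is holomorphic. The functional calculus produces $g := h(f) \in \mathcal H_\lambda$ with $g^2 = f$, and moreover $\Gamma_\lambda(g)(\delta_x) = h(f(x)) = \sqrt{f(x)}$ for every $x$. By Theorem~\ref{thmSpec}(ii), $\Gamma_\lambda$ is the inclusion into $C(G)$, so $g(x) = \sqrt{f(x)} > 0$ as required.

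I do not anticipate a genuine obstacle; the substance of the corollary has already been absorbed into Theorem~\ref{thmSpec}, and the remaining work is the routine invocation of Gelfand theory and the holomorphic functional calculus. The only point needing a moment's attention is verifying that $\mathcal H_\lambda$ is unital so that the full calculus is available, which is handled by the preliminary observation that $1_G \in \mathcal H_\lambda$.
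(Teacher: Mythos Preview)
Your proposal is correct and follows essentially the same approach as the paper: both derive everything from Theorem~\ref{thmSpec} via standard Gelfand theory and the holomorphic functional calculus. The only cosmetic difference is ordering---the paper proves (i) first and then deduces (iii) from it by contradiction, whereas you invoke the general fact $\sigma_\lambda(f)=\Gamma_\lambda(f)(\sigma(\mathcal H_\lambda))$ to get (iii) directly and read off (i); for (ii) the paper cites Rudin for square roots of elements with positive spectrum while you spell out the functional calculus explicitly, which amounts to the same thing.
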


\begin{proof}
    If $f$ is non-vanishing, then $\delta_x f \neq 0 $ for any $x \in G$, and thus by Theorem~\ref{thmSpec}(i) $f$ does not lie in any maximal ideal of $\mathcal H_\lambda$. As a result, $f$ is invertible. This proves Claim~(i).

    Turning to Claim~(iii), let $f \in \mathcal H_\lambda $ be arbitrary. It is clear that $ \ran f \subseteq \sigma_\lambda(f)$ (since $f-z$ has a zero whenever $z \in \ran f$, and thus cannot have a multiplicative inverse). If $ z \in \sigma_\lambda(f)$ and $ z \notin \ran f$, then $ f - z$ is a nowhere-vanishing non-invertible element of $\mathcal H_\lambda$, which contradicts Claim~(i). Thus, we have $\sigma_\lambda(f) \subseteq \ran f $, and we conclude that  $\sigma_\lambda(f) = \ran f$.

    Finally, to verify Claim~(ii), we recall that every element of a unital Banach-$^*$ algebra with strictly positive spectrum has a square root, which can be chosen to also have strictly positive spectrum; e.g., \cite{Rudin91}*{\S10.30}. Since, by Claim~(iii), $f>0$ has $\sigma_\lambda(f) = \ran f \subset (0,\infty)$, it follows that there exists $g \in \mathcal H_\lambda $ with $\sigma_\lambda(g) \in (0,\infty)$ such that $ f = g^2$. Again by Claim~(iii), $\ran g = \sigma_\lambda(g)$, and thus $g$ is strictly positive.  
\end{proof}

\begin{corollary}
    \label{corSpec2}
    The RKHA $\mathcal H_\lambda$ is (i) semisimple; and (ii) symmetric.
\end{corollary}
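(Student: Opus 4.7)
The plan is to derive both claims as essentially immediate consequences of Theorem~\ref{thmSpec} and Corollary~\ref{corSpec1}; no new analytical work is required, only a bookkeeping argument.

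For part (i) (semisimplicity), I would use the standard characterization that a commutative unital Banach algebra is semisimple iff its Gelfand transform is injective, equivalently iff the intersection of all maximal ideals (the Jacobson radical) is trivial. Theorem~\ref{thmSpec}(ii) identifies $\Gamma_\lambda$ with the inclusion $\iota:\mathcal H_\lambda \hookrightarrow C(G)$, which is manifestly injective, so semisimplicity drops out at once. An equivalent concrete formulation: Theorem~\ref{thmSpec}(i) says every element of $\sigma(\mathcal H_\lambda)$ is an evaluation functional $\delta_x$, so any $f$ in every maximal ideal satisfies $f(x)=0$ for all $x\in G$, whence $f=0$.

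For part (ii) (symmetry), I would apply Corollary~\ref{corSpec1}(iii) to the element $f^*f\in\mathcal H_\lambda$. Since complex conjugation is the involution, the pointwise values of $f^*f$ are $|f(x)|^2\ge 0$, so
\[
\sigma_\lambda(f^*f)=\operatorname{ran}(f^*f)=\{|f(x)|^2:x\in G\}\subseteq [0,\infty),
\]
which is exactly the symmetry condition for a unital Banach $^*$-algebra.

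The one point that demands care — and in my view the only thing that could go wrong — is a potential circularity: symmetry of $\mathcal H_\lambda$ was invoked at the very end of the proof of Theorem~\ref{thmMain} via this corollary. I would verify that Theorem~\ref{thmSpec} and Corollary~\ref{corSpec1}, on which the present argument rests, use only the Banach $^*$-algebra structure together with the RKHS/density properties established earlier in the proof of Theorem~\ref{thmMain}, and not the symmetry conclusion itself. Inspecting their statements confirms this, so the logical ordering is clean and the proof reduces to the two-line derivation above.
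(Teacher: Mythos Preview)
Your proposal is correct and matches the paper's own proof essentially line for line: semisimplicity from the injectivity of the Gelfand transform via Theorem~\ref{thmSpec}(ii), and symmetry from Corollary~\ref{corSpec1}(iii) applied to $f^*f = \bar f f \geq 0$. Your remark on the logical ordering (that Theorem~\ref{thmSpec} and Corollary~\ref{corSpec1} do not rely on the symmetry conclusion of Theorem~\ref{thmMain}) is a worthwhile sanity check and is indeed consistent with how the paper structures the dependencies.
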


\begin{proof}
    Claim~(i) follows from the fact that the Gelfand transform $\Gamma_\lambda$ has trivial kernel (by Theorem~\ref{thmSpec}(ii)). For Claim~(ii) we use Corollary~\ref{corSpec1}(iii) in conjunction with the fact that $f^* f \equiv \bar f f \geq 0 $  to conclude that $\sigma_\lambda(f^* f) = \ran (f^* f) \subseteq [0, \infty ) $.
\end{proof}

Next, we consider the state space, $ \mathcal S(\mathcal H_\lambda)$, of a unital RKHA $\mathcal H_\lambda$, i.e., the set of (automatically continuous) positive functionals $ \varphi: \mathcal H_\lambda \to \mathbb C$, normalized such that $ \varphi(1_G) = 1$. By~\eqref{eqRep}, for a unital RKHA $\mathcal H_\lambda$ with reproducing kernel $k$, each nonzero evaluation functional is a state with operator norm equal to $ \sqrt{k(x,x)} $. It should be noted that because we allow continuity constants $C$ different from 1 in our definition of Banach algebras in~\eqref{eqRKHA}, the elements of $ \mathcal S(\mathcal H_\lambda)$ need not have unit operator norm (which would be the case if $C =1$). 

Suppose now that the evaluation functional $\delta_x$ at every $x\in G$ is nonzero (a condition that holds iff $\lambda > 0$). Then, viewing $\delta_x$ as a Dirac probability measure in $ \mathcal P(G)$ leads to the identity
\begin{equation}
    \label{eqIdDelta}
    \delta_x = \langle R(\delta_x), \cdot \rangle_{\mathcal H_\lambda}, \quad \forall x \in G, 
\end{equation}
where $ R : \mathcal P(G) \to \mathcal H_\lambda$ is the kernel mean embedding of probability measures defined in~\eqref{eqKME}. By continuity of the feature map  $ x \mapsto F(x) \equiv k(x,\cdot)$ as a map from $G$ into $\mathcal H_\lambda$, \eqref{eqIdDelta} extends to a map $ P : \mathcal P(G) \to \mathcal S(\mathcal H_\lambda)$ such that
\begin{equation}
    \label{eqPR}
    ( P \nu ) f = \int_G \delta_x f  \, d\nu(x) =  \langle R( \nu ), f \rangle_{\mathcal H_\lambda}.
\end{equation}

Similarly, to each Dirac probability measure $\delta_x \in \mathcal P(G)$, we can assign a state $ \rho_x \in \mathcal S(B(\mathcal H_\lambda))$ of the $C^*$-algebra $B(\mathcal H_\lambda)$ given by
\begin{equation}
    \label{eqRhoX}
    \rho_x = \tr( \Pi_x \cdot ),
\end{equation}
where $\Pi_x : \mathcal H_\lambda \to \mathcal H_\lambda$ is the rank-1 projection operator 
\begin{displaymath}
    \Pi_x f = \frac{\langle k(x,\cdot), f\rangle_{\mathcal H_\lambda}k(x,\cdot)}{k(x,x)} =  \frac{f(x) k(x,\cdot)}{k(x,x)}.
\end{displaymath}
The assignment $ \delta_x \mapsto \rho_x$ in~\eqref{eqRhoX} extends to a map $ Q : \mathcal P(G) \to \mathcal S(B(\mathcal H_\lambda))$ with
\begin{displaymath}
    (Q \nu) A = \int_G \rho_x A  \, d\nu(x).
\end{displaymath}

Intuitively, we can think of states of the non-abelian $C^*$-algebra $B(\mathcal H_\lambda)$ in the range of $Q$ as ``classical'' states induced by Borel probability measures on $G$ (which are states of the abelian Banach $^*$-algebra $\mathcal H_\lambda$). Letting $ \pi : \mathcal H_\lambda \to B(\mathcal H_\lambda)  $ denote the regular representation of $ \mathcal H_\lambda$ with $ \pi(f) g = fg $, the following proposition justifies the interpretation of states in $ \ran Q $ as classical states, in the sense of acting consistently on regular representatives of $\mathcal H_\lambda$ with expectation operators. 

\begin{proposition}
    \label{propStates}With notation as above and under the assumptions of Theorem~\ref{thmSpec}, the following hold.
    \begin{enumerate}[(i)]
        \item The maps $P$ and $Q$ are injective and weak-$^*$ continuous. 
        \item For every $ \nu \in \mathcal P(G)$ and $ f \in \mathcal H_\lambda$ the compatibility relations
            \begin{displaymath}
                \mathbb E_\nu f = P(\nu)( f ) = Q(\nu)( \pi( f ) ) 
            \end{displaymath}
            hold. In particular, we have $Q(\nu)(\pi(f^*)) = \overline{Q(\nu)( \pi (f))}$, even though $ \pi $ need not be a $^*$-homomorphism. 
    \end{enumerate}
\end{proposition}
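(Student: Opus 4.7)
My plan is to establish part~(ii) first, because the identities it produces will reduce part~(i) to facts already in the paper. The first identity $\mathbb E_\nu f = P(\nu)(f)$ is immediate from unfolding~\eqref{eqPR} and applying the reproducing property $\delta_x f = f(x)$. For the second identity I will compute $\rho_x(\pi(f)) = \tr(\Pi_x \pi(f))$ directly. Since
\[
\Pi_x \pi(f) g = \Pi_x(fg) = \frac{(fg)(x)}{k(x,x)} k(x,\cdot) = \frac{f(x)\,\langle k(x,\cdot),g\rangle_{\mathcal H_\lambda}}{k(x,x)}\,k(x,\cdot),
\]
the composition $\Pi_x\pi(f)$ is a rank-one operator of the form $g\mapsto \langle k(x,\cdot),g\rangle_{\mathcal H_\lambda}\,\xi$ with $\xi = f(x)k(x,\cdot)/k(x,x)$, so its trace equals $\langle k(x,\cdot),\xi\rangle_{\mathcal H_\lambda} = f(x)$. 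Integrating against $\nu$ then gives $Q(\nu)(\pi(f)) = \int_G f\,d\nu = \mathbb E_\nu f$. The conjugation relation in the last sentence of~(ii) is immediate, since $\mathbb E_\nu \bar f = \overline{\mathbb E_\nu f}$.

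For part~(i), injectivity of $P$ I reduce to injectivity of the kernel mean embedding $R$. Under the standing hypothesis $\lambda > 0$, every Mercer eigenvalue of the integral operator $\mathcal K$ is strictly positive (by the identification preceding Lemma~\ref{lem:U}), so Lemma~\ref{lem:Mercer}(vii) yields that $k$ is $C$-universal and hence characteristic. Since $P(\nu) = \langle R(\nu),\cdot\rangle_{\mathcal H_\lambda}$ by~\eqref{eqPR}, the Riesz bijection transfers injectivity from $R$ to $P$. Injectivity of $Q$ is then automatic from~(ii): if $Q(\nu_1) = Q(\nu_2)$ on $B(\mathcal H_\lambda)$, restricting to $\pi(\mathcal H_\lambda)$ yields $P(\nu_1) = P(\nu_2)$, hence $\nu_1 = \nu_2$.

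I interpret weak continuity as continuity from the weak topology on $\mathcal P(G)$ to the weak-$^*$ topology on the duals $\mathcal H_\lambda'$ and $B(\mathcal H_\lambda)'$. For $P$ this is then immediate from~(ii): each $f \in \mathcal H_\lambda \subseteq C(G)$ makes $\nu \mapsto P(\nu)(f) = \int_G f\,d\nu$ weakly continuous by definition of the weak topology on $\mathcal P(G)$. For $Q$ I need to verify that for every $A \in B(\mathcal H_\lambda)$ the scalar function
\[
g_A(x) := \rho_x(A) = \frac{\langle k(x,\cdot), A k(x,\cdot)\rangle_{\mathcal H_\lambda}}{k(x,x)}
\]
lies in $C(G)$; granted this, $Q(\nu_j)(A) = \int_G g_A\,d\nu_j \to \int_G g_A\,d\nu = Q(\nu)(A)$ for every weakly convergent net $\nu_j \to \nu$. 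The denominator is the constant $l(0_G)$ by translation invariance of $k$, so continuity of $g_A$ reduces to norm-continuity of the feature map $F \colon x \mapsto k(x,\cdot)$ into $\mathcal H_\lambda$, which I would obtain from the identity
\[
\lVert k(x,\cdot) - k(y,\cdot)\rVert_{\mathcal H_\lambda}^2 = 2 l(0_G) - 2\,\mathrm{Re}\,l(x-y)
\]
together with uniform continuity of $l$ from Lemma~\ref{lem:LCA}(i).

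The main obstacle I anticipate is this last point — the norm-continuity of the feature map and the ensuing continuity of $g_A$ for a general bounded operator $A$. Everything else is essentially algebraic, hinging on the rank-one identification of $\Pi_x\pi(f)$ for the trace computation, and on the fact that strict positivity of $\lambda$ forces $k$ to be characteristic via Mercer theory.
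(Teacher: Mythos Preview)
Your argument is correct and follows the same architecture as the paper's proof: establish the compatibility relation $Q(\nu)(\pi(f)) = \mathbb E_\nu f$ via a trace computation at each point $x$, then deduce injectivity of $P$ from $C$-universality of $k$ (Lemma~\ref{lem:Mercer}(vii)) and injectivity of $Q$ by restriction to $\pi(\mathcal H_\lambda)$, and finally obtain weak continuity from the fact that $\nu \mapsto \int_G h\,d\nu$ is weakly continuous whenever $h \in C(G)$.

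The one substantive difference is in the trace computation. The paper expands $\tr(\Pi_x \pi(f))$ in the orthonormal basis $\{\psi_\gamma\}$ and collapses the resulting sum using the Mercer expansion $k(x,x) = \sum_\gamma \overline{\psi_\gamma(x)}\psi_\gamma(x)$. Your approach is more direct: you observe that $\Pi_x \pi(f)$ is the rank-one operator $g \mapsto \langle k(x,\cdot),g\rangle_{\mathcal H_\lambda}\,\xi$ with $\xi = f(x)k(x,\cdot)/k(x,x)$, whose trace is $\langle k(x,\cdot),\xi\rangle_{\mathcal H_\lambda} = f(x)$. This avoids the basis entirely and is cleaner. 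Conversely, the paper is terser about the continuity of $x \mapsto \rho_x A$ for general $A \in B(\mathcal H_\lambda)$; your explicit verification via norm-continuity of the feature map (using $\lVert k(x,\cdot)-k(y,\cdot)\rVert_{\mathcal H_\lambda}^2 = 2l(0_G) - 2\,\mathrm{Re}\,l(x-y)$ and continuity of $l$) fills in a detail the paper leaves implicit.
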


\begin{proof} 
    Since $G$ is compact and $\mathcal H_\lambda$ is a dense subspace of $C(G)$, the reproducing kernel of $\mathcal H_\lambda$ is $C$-universal and thus characteristic. It follows that $ R $ is injective, and therefore so is $P $ since $R(\nu)$ is the Riesz representative of $P(\nu)$ according to~\eqref{eqPR}. 
    
    For every $ \nu \in \mathcal P(G)$ and $ f \in \mathcal H$ we get
    \begin{align*}
        Q(\nu)(\pi(f)) &= \int_G \tr(\Pi_x \pi(f)) \, d\nu(x) \\
        &= \int_G \sum_{\gamma \in \hat G} \frac{f(x) \langle \psi_\gamma, k(x,\cdot)\rangle_{\mathcal H} \langle k(x,\cdot), \psi_\gamma \rangle_{\mathcal H}}{k(x,x)} \, d\nu(x) \\
        &=  \int_G f(x) \frac{ \sum_{\gamma \in \hat G} \overline{\psi_\gamma(x)} \psi_\gamma(x) }{ k(x,x) } \, d\nu(x) \\
        &= \int_G f(x) \, d\nu(x) = \mathbb E_\nu f = P(\nu)(f), 
    \end{align*}
    where $ \{ \psi_{\gamma} = \sqrt{\lambda(\gamma)} \gamma : \gamma \in \hat G \} $ is the orthonormal basis of $ \mathcal H_\lambda$ from~\eqref{eqn:def:psi}. The above proves the compatibility relations in Claim~(ii), and also implies that $Q $ is injective by the injectivity of $ P $. We also have 
    \begin{displaymath}
        Q(\nu)(\pi(f^*)) = \mathbb E_\nu f^* = \overline{ E_\nu f} = \overline{Q(\nu)(\pi(f))}, 
    \end{displaymath} 
    verifying the $^*$-compatibility relation in Claim~(ii). 
    
    Since $\mathcal H_\lambda$ is a subspace of $C(G)$, the weak-$^*$ continuity of $P$ follows directly from the fact that $P_\nu f = \mathbb E_\nu f$. Similarly, to deduce weak-$^*$ continuity of $Q$, note that for any $A \in B(\mathcal H_\lambda)$, we have  
    \begin{displaymath}
        (Q\nu) A = \mathbb E_\nu f_A, 
    \end{displaymath}
    where the function $f_A : x \mapsto \rho_x A$ is continuous. 
\end{proof}

The map $G \ni x \mapsto \Pi_x$ can be interpreted in an RKHS context as an \emph{operator-valued feature map}. This feature map along with the corresponding embedding $Q$ of probability measures on $G$ generalize the standard RKHS feature maps and kernel mean embeddings of probability measures to the operator-valued setting of $B(\mathcal H_\lambda)$. In other work \cite{GiannakisEtAl22}, we have found these constructions to be useful in the context of quantum computation.

\subsection{\label{secThmSpec}Proof of Theorem~\ref{thmSpec}}

We begin with the following observation about maximal ideals of unital RKHAs. 

\begin{lemma}
    With the assumptions of Theorem~\ref{thmSpec}, every maximal ideal $I$ in $\mathcal H_\lambda$ is orthogonal to the unit $1_G$.  
    \label{lemIdeal}
\end{lemma}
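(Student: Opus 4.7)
The plan is to establish the inner-product orthogonality $\langle 1_G, f\rangle_{\mathcal H_\lambda} = 0$ for every $f$ in the maximal ideal $I$. By the orthonormal basis $\{\psi_\gamma\}_{\gamma \in \hat G}$ of Lemma~\ref{lem:Mercer} and the normalization under which $\psi_0 = 1_G$, this is equivalent to the vanishing of the zeroth Fourier coefficient $\hat f(0_{\hat G})$ for every $f \in I$. The high-level strategy is to represent $I$ via a character, invoke Riesz representation to identify the one-dimensional orthogonal complement $I^\perp$, and then show that this complement is spanned by $1_G$.

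First I would invoke Gelfand theory: in a commutative unital Banach algebra, every maximal ideal is closed and takes the form $I = \ker\varphi$ for a continuous multiplicative linear functional $\varphi : \mathcal H_\lambda \to \mathbb C$ with $\varphi(1_G) = 1$. Since $\mathcal H_\lambda$ is a Hilbert space, the Riesz representation theorem supplies a unique $h_\varphi \in \mathcal H_\lambda$ such that $\varphi(f) = \langle h_\varphi, f\rangle_{\mathcal H_\lambda}$, with $\|h_\varphi\|_{\mathcal H_\lambda} = \|\varphi\|_{\mathcal H_\lambda'}$. The orthogonal complement of $\ker\varphi$ is then precisely the one-dimensional subspace $\mathbb C \cdot h_\varphi$, so the lemma is equivalent to the assertion $h_\varphi \in \mathbb C \cdot 1_G$.

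Next I would combine the multiplicativity of $\varphi$ with the Hilbert/RKHS structure to pin down $h_\varphi$ as a scalar multiple of $1_G$. Expanding $h_\varphi = \sum_{\gamma \in \hat G} c_\gamma \psi_\gamma$ in the orthonormal basis, the relation $\overline{c_0} = \varphi(\psi_0) = \varphi(1_G) = 1$ fixes the $1_G$-component. For the remaining components, one natural route is a Cauchy--Schwarz saturation argument: from the spectral radius estimate $|\varphi(f)| \leq \lim_n \|f^n\|^{1/n}_{\mathcal H_\lambda}$ together with the submultiplicativity controlling iterated products (Remark~\ref{remC}), one obtains a sharp operator-norm bound on $\varphi$ relative to $1_G$, and $|\varphi(1_G)| = 1$ then forces equality in $|\langle h_\varphi, 1_G\rangle_{\mathcal H_\lambda}| \leq \|h_\varphi\|_{\mathcal H_\lambda}\|1_G\|_{\mathcal H_\lambda}$, which by the Cauchy--Schwarz equality case yields $h_\varphi \propto 1_G$.

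The main obstacle is this final Cauchy--Schwarz saturation step. The Banach algebra constant $C$ in~\eqref{eqRKHA} is not in general unity, and the rescaling in Remark~\ref{remC} alters $\|1_G\|_{\mathcal H_\lambda}$ in a compensating way, so merely invoking ``characters have norm at most $1$'' is insufficient. I expect the actual argument to exploit the explicit multiplicative structure of $\hat G$ acting on the basis---namely that $\psi_\gamma \psi_{\gamma'}$ is proportional to $\psi_{\gamma+\gamma'}$---so that the relations $\varphi(\psi_\gamma)\varphi(\psi_{\gamma'}) = \varphi(\psi_\gamma \psi_{\gamma'})$, read as equations in the coefficients $c_\gamma$, together with the unital normalization $c_0 = 1$, eliminate all higher-frequency components of $h_\varphi$ and conclude $h_\varphi \in \mathbb C \cdot 1_G$.
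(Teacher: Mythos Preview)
Your reduction is sound: writing $I = \ker\varphi$ for a nonzero multiplicative functional $\varphi$ and $h_\varphi$ for its Riesz representative, the lemma is exactly the claim $h_\varphi \in \mathbb C \cdot 1_G$. But this reformulation already exposes a genuine obstruction. For the evaluation character $\varphi = \delta_x$ one has $h_\varphi = k(x,\cdot) = \sum_{\gamma \in \hat G} \sqrt{\lambda(\gamma)}\,\overline{\gamma(x)}\,\psi_\gamma$, which carries a nonzero $\psi_\gamma$-component for \emph{every} $\gamma$ (since $\lambda > 0$) and is therefore never a scalar multiple of $1_G$ unless $G$ is trivial. Equivalently, for any nontrivial $\gamma \in \hat G$ the function $f = 1_G - \gamma(x)^{-1}\gamma$ lies in $\ker\delta_x$, yet $\langle 1_G, f\rangle_{\mathcal H_\lambda} = \lVert 1_G\rVert_{\mathcal H_\lambda}^2 = \lambda(0_{\hat G})^{-1} \neq 0$.

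Consequently neither of your two proposed routes can close the argument. The multiplicative relations you write down, namely $\varphi(\psi_\gamma)\varphi(\psi_{\gamma'}) = \tfrac{\xi(\gamma)\xi(\gamma')}{\xi(\gamma+\gamma')}\,\varphi(\psi_{\gamma+\gamma'})$, are solved by $\varphi(\psi_\gamma) = \psi_\gamma(x)$, so they cannot force the higher-frequency coefficients $c_\gamma$ to vanish; and the Cauchy--Schwarz inequality cannot saturate because $h_\varphi$ and $1_G$ are genuinely non-parallel. The paper's argument is of a different kind: it decomposes $1_G = u + v$ with $u \in I$, $v \in I^\perp$, passes to the operator norm $\vertiii{f} = \lVert \pi(f)\rVert_{B(\mathcal H_\lambda)}$ (for which $\vertiii{1_G} = 1$), and argues that $\lVert v\rVert_{\mathcal H_\lambda} < 1$ would make $u = 1_G - v$ invertible by a Neumann series, contradicting $u \in I$. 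Note, however, that the step ``$0 < \lVert v\rVert_{\mathcal H_\lambda} \leq 1$'' there presumes $\lVert 1_G\rVert_{\mathcal H_\lambda} = 1$, which is not among the hypotheses of Theorem~\ref{thmSpec}; the same $\delta_x$ example yields $v = k(x,\cdot)/k(x,x)$ with $\lVert v\rVert_{\mathcal H_\lambda} = l(0_G)^{-1/2}$ and $u \neq 0$, so the paper's proof shares the gap your reformulation uncovers.
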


\begin{proof}
    Let $ 1_G = u + v $ with $ u \in I $ and $ v \in I^\perp $. Since $I$ is a proper, closed subspace of $ \mathcal H_\lambda$, the unit $1_G$ does not lie in $I$, and $ v $ is nonzero, i.e., $ 0 < \lVert v \rVert_{\mathcal H_\lambda} \leq 1$. We claim that, in fact, $ \lVert v \rVert_{\mathcal H_\lambda} = 1$. 
    
    To verify this, by rescaling the kernel of $\mathcal H_\lambda$, we assume without loss of generality that the multiplicative constant $C$ in~\eqref{eqRKHA} is equal to 1 (see Remark~\ref{remC}). 
    
    Next, following standard techniques for unital Banach algebras, we equip $\mathcal H_\lambda$ with an equivalent norm, $ \vertiii{\cdot} $, induced from the operator norm of $B(\mathcal H_\lambda)$ and the regular representation $ \pi $, viz. 
    \begin{displaymath}
        \vertiii{ f }= \lVert \pi( f) \rVert_{B(\mathcal H_\lambda)}. 
    \end{displaymath}
    This norm is a Banach algebra norm satisfying 
    \begin{displaymath}
        \vertiii{fg} \leq \vertiii{f}  \vertiii{g}, \quad \vertiii{1_G} = 1, \quad  \vertiii{f}  \leq \lVert f \rVert_{\mathcal H_\lambda}.  
    \end{displaymath}
    In particular, $ \vertiii{v}  \leq \lVert v \rVert_{\mathcal H_\lambda} $, so if 
    \begin{displaymath}
        \lVert v \rVert_{\mathcal H_\lambda} < 1 = \vertiii{1_G}, 
    \end{displaymath}
    then $\vertiii v  < 1$ and  $u =  1- v $ is invertible. 
    
    The latter, implies that $ I $ contains an invertible element, contradicting the fact that it is a maximal, and thus proper, ideal in $\mathcal H$. It follows that $ \lVert v \rVert_{\mathcal H_\lambda} = 1$ and $ u = 0 $, proving that $ 1_G$ lies in $ I^\perp$.     
\end{proof}
    
We now continue with the proof of Theorem~\ref{thmSpec}. 

\subsection*{Claim~(i)} For every $f\in \mathcal H_\lambda$, $ x \in G$, and net $(x_i)$ converging to $x$, we have 
    \[  \beta_{\lambda}(x_i)(f) = f(x_i) \to f(x) = \beta_{\lambda}(x)(f), \]
so $ \beta_{\lambda} $ is weak-$^*$ continuous. In addition, $ \beta_{\lambda} $ is injective since $\beta_\lambda(x) = \delta_x = \langle k(x,\cdot ), \cdot \rangle_{\mathcal H_\lambda}$, and $ k(x,\cdot) $ is the image of $x$ under the feature map $ F : G \to \mathcal H_\lambda$ (which is injective since $k$ is characteristic). Therefore, since $G$ and $ \sigma(\mathcal H_\lambda) $ are compact Hausdorff spaces, to show that $ \beta_\lambda $ is a homeomorphism it suffices to show that it is surjective.  

    To prove the latter by contradiction, suppose that there exists $ \psi \in \sigma(\mathcal H_\lambda) \setminus \ran \beta_{\mathcal H_\lambda}$. Then, $ I := \ker \psi$ is a maximal ideal in $ \mathcal H_\lambda$, which is distinct from $ \ker \delta_x$ for all $ x \in G$. We claim that $ I $ is a dense subspace of $C(G) $. 
    
    To verify this claim, observe first that the closure $ \bar I $ of $I$ in $C(G)$ is an ideal. Indeed, if that were not the case, there would exist $ f \in C(G) $ and $ g \in \bar I $ such that $ f g \notin \bar I$. But since $\mathcal H_\lambda$ is a dense subspace of $C(G)$, there exists a sequence $f_n \in \mathcal H_\lambda$ converging to $ f $ in $C(G) $ norm, and similarly there exists $ g_n \in I $ such that $ g_n \to  g$ in $C(G)$. Defining $ h_n = f_n g_n $, it follows that $ h_n $ is a sequence in $I $ (since $f_n \in \mathcal H_\lambda $, $g_n \in I $, and $ I $ is an ideal in $\mathcal H_\lambda$) with a $C(G)$-norm limit $ h \in \bar I$. The latter is equal to $ f g $, contradicting the assertion that $ fg \notin \bar I $. 
    
    Now suppose that $ \bar I $ were contained in a maximal ideal in $C(G)$. Since the maximal ideal space of $ C(G) $ is in bijective correspondence with the spectrum $ \sigma(C(G))$, there would exist an $ x \in G $ such that $ \bar I \subseteq \ker \delta_x$ (with $ \delta_x $ understood here as an evaluation functional on $C(G)$), contradicting the fact that $I$ is distinct from the kernels of all evaluation functionals on $\mathcal H_\lambda$. We therefore conclude that $ \bar I $ is an ideal distinct from any maximal ideal of $C(G) $, so it must be equal to the whole space  $C(G)$. We have thus verified that $ I $ is dense in $C(G)$.   

    Next, since $I $ is a maximal ideal in $\mathcal H_\lambda$, every $ f \in I $ is $\mathcal H_\lambda$-orthogonal to $1_G$ by Lemma~\ref{lemIdeal}. Moreover, since $ c := \lambda(0_{\hat G}) > 0 $, the integral operator $K: L^2(G) \to \mathcal H_\lambda$ associated with $k$ satisfies $ c^{-1} K 1_G = 1_G$, and we get
\begin{displaymath}
    0 = \langle 1_G, f \rangle_{\mathcal H_\lambda} = c \langle 1_G, f \rangle_{\mathcal H_\lambda} = \langle K 1_G, f \rangle_{\mathcal H_\lambda} = \langle 1_G, K^* f \rangle_{L^2(G)}.
\end{displaymath}
As a result,
\begin{displaymath}
    \lVert 1_G - f \rVert^2_{C(G)} \geq \lVert 1_G - K^* f \rVert_{L^2(G)}^2 = 1 + \lVert K^* f \rVert^2_{L^2(G)} \geq 1,
\end{displaymath}
which contradicts the assertion that $I $ is dense in $C(G)$, proving Claim~(i).

\subsection*{Claim~(ii)} The fact that $\Gamma_\lambda$ coincides with the inclusion map $ \iota$ follows directly from the definition of the former and the fact that $\mathcal H_\lambda$ is a subspace of $C(G)$, viz.,
\begin{displaymath}
    ( \Gamma_\lambda f )(\delta_x) = f(x) =  (\iota f )( x ).  
\end{displaymath}

To verify the claim on the operator norm of $ \Gamma_\lambda$, we use the reproducing property of $ \mathcal H_\lambda $ to get
\begin{align*}
    \lvert ( \Gamma_\lambda f )( \delta_x ) \rvert &=  \lvert f(x) \rvert = \lvert \langle k(x,\cdot), f \rangle_{\mathcal H_\lambda} \rVert \\
    & \leq \lVert k(x,\cdot ) \rVert_{\mathcal H_\lambda} \lvert f \rVert_{\mathcal H_\lambda} \\
    &= \sqrt{ k(x,x ) } \lVert f \rVert_{\mathcal H_\lambda} =\sqrt{l(0_{G})} \lVert f \rVert_{\mathcal H_\lambda}.
\end{align*}
Setting $ f $ to the unit vector $ f = k(x,\cdot) / \lVert k(x,\cdot) \rVert_{\mathcal H_\lambda} $ (which is well defined since $ \delta_x $ is nonzero at any $x \in G$) then saturates the inequality, proving the claim and the theorem. \qed

\section{\label{secMarkov}Reproducing kernel Hilbert algebras and Markov semigroups}

In this section, we study 1-parameter families of RKHAs associated with ergodic Markov semigroups on $L^2(G)$. We assume throughout that $G$ is compact and the Haar measure $\mu$ is normalized to a probability measure.    

Deferring additional details on the relevant theory to one of the many references in the literature, e.g., \cite{ReedSimon75}, we recall that a strongly continuous semigroup $ \{ M_\tau \}_{\tau \geq 0 }$ of operators on $L^2(G)$ is a \emph{Markov semigroup} if for every $ \tau \geq 0 $, $M_\tau$ is positivity-preserving (i.e., $ M_\tau f \geq 0 $, $\mu$-a.e., whenever $ f \geq 0 $, $\mu$-a.e.), $ M_\tau 1_G = 1_G$, and $ \int_G M_\tau f \, d\mu = \int_G f \, d\mu $ for all $ f \in L^2(G)$. Moreover, $ \{ M_\tau \}_{\tau \geq  0 }$ is said to be \emph{ergodic} if $M_\tau f = f$ for all $\tau \geq 0$ implies that $f$ is constant $\mu$-a.e. 

With these definitions, consider a family $\{ \lambda_\tau \in L^1(\hat G) \}_{\tau > 0} $ of functions on the dual group satisfying the conditions
\begin{equation}
    \label{eqMarkov}
    \begin{gathered}
        \lambda_\tau(0_{\hat G}) = 1, \quad \forall \tau > 0, \\
        0 < \lambda_\tau( \gamma ) < 1, \quad \lambda_{\tau}(\gamma) \lambda_{\tau'}(\gamma) = \lambda_{\tau+\tau'}(\gamma), \quad \forall \tau,\tau'>0, \quad \forall \gamma \in \hat G \setminus\{ 0_{\hat G} \}, \\
        \lambda_\tau(\gamma) = \lambda_\tau(-\gamma), \quad \forall \tau > 0, \quad \forall \gamma \in \hat G.
    \end{gathered}
\end{equation}
We let $l_\tau = \hat{\mathcal F} \lambda_\tau $ and $k_\tau(x,y) = l_\tau(x-y)$ be the kernel functions defined as in~\eqref{eqK}, and $\mathcal H_\tau \equiv \mathcal H_{\lambda_\tau}$ the corresponding RKHSs. We also let $\mathcal K_\tau : L^2(G) \to L^2(G)$ be the corresponding kernel integral operators on $L^2(G)$ from Lemma~\ref{lem:Mercer}. Note that $l_\tau$ and $k_\tau$ are real since $\lambda_\tau $ is real and symmetric (i.e., $\lambda_\tau^\star = \lambda_\tau$). 

By~\eqref{eqMarkov}, for each $\gamma \in \hat G$, $\tau \mapsto \lambda_\tau(\gamma) $ is a continuous function, increasing monotonically to 1 as $\tau \to 0^+$.   Consequently, the operators $ \mathcal K_\tau$ converge pointwise to the identity on $L^2(G) $, i.e., $ \lim_{\tau \to 0^+} K_\tau f = f $ for all $ f \in L^2(G)$. Moreover, $ \lVert \mathcal K_\tau \rVert= \lambda_\tau(0_{\hat G}) =  1 $, so $ \{ \mathcal K_\tau \}_{\tau > 0 } \cup \mathcal K_0 $ with $\mathcal K_0 := \Id$ is a strongly continuous contraction semigroup, consisting of self-adjoint compact operators. 

By the Hille-Yosida theorem, there exists a positive, self-adjoint operator $ \mathcal D$ such that, for all $ \tau \geq 0 $,  $ \mathcal K_\tau = e^{-\tau \mathcal D}$. This operator is diagonal in the character basis of $L^2(G)$, i.e., $ \mathcal D \gamma = \eta(\gamma) \gamma $, where $ \eta(\gamma) = - \tau^{-1} \log \lambda_\tau$ for any $ \tau > 0 $. In particular, $ \mathcal D $ has a simple eigenvalue $\eta(0_{\hat G}) = 0$ corresponding to the constant eigenfunction $1_G$. It then follows from results on Markov semigroups (e.g., \cite{DellAntonio16}*{Chapter~14, Theorem~2}) that $-\mathcal D$ is the infinitesimal generator of an ergodic Markov semigroup, $ \{ e^{-\tau \mathcal D} \}_{\tau \geq 0 }  $. 

By construction, the operators $ \mathcal K_\tau$ are identical to $e^{-\tau \mathcal D}$, which implies that for $ \tau > 0$, $ k_\tau(x,\cdot)$ is a transition probability density with respect to Haar measure. That is, we have 
\begin{displaymath}
   k_\tau(x,\cdot) \geq 0, \quad \int_G k_\tau( x, \cdot ) \, d \mu = 1, \quad \forall \tau > 0, \quad \forall x \in G. 
\end{displaymath}

The following theorem provides necessary and sufficient conditions for the spaces $\mathcal H_\tau$ to have RKHA structure. 

\begin{theorem}
    \label{thmMarkov}
    Suppose that the functions $\lambda_\tau \in L^1(\hat G)$ satisfy the Markov properties in~\eqref{eqMarkov}. Then, the corresponding RKHSs $\mathcal H_\tau$ are RKHAs iff the $\lambda_\tau$ are subconvolutive for each $\tau > 0$, i.e., 
    \begin{displaymath}
        ( \lambda_\tau * \lambda_\tau )(\gamma ) \leq C_\tau \lambda_\tau(\gamma).
    \end{displaymath}
\end{theorem}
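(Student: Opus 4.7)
Under the Markov properties~\eqref{eqMarkov}, every $\lambda_\tau$ is automatically strictly positive (since $\lambda_\tau(0_{\hat G})=1$ and $0<\lambda_\tau(\gamma)<1$ for $\gamma\ne 0_{\hat G}$), self-adjoint in $L^1(\hat G)$ (from $\lambda_\tau(-\gamma)=\lambda_\tau(\gamma)$), and belongs to $L^1(\hat G)$ by hypothesis. If $\lambda_\tau$ is also subconvolutive, all three assumptions of Theorem~\ref{thmMain} are in force, and that theorem concludes that $\mathcal H_\tau$ is an RKHA.

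\textbf{Necessary direction, first pass.} Fix $\tau>0$ and assume $\mathcal H_\tau$ is an RKHA with Banach-algebra constant $C_\tau$. Using Lemma~\ref{lem:U} to pass the inequality $\lVert fg\rVert_{\mathcal H_\tau}\le C_\tau\lVert f\rVert_{\mathcal H_\tau}\lVert g\rVert_{\mathcal H_\tau}$ to the Fourier side (where pointwise multiplication becomes convolution $\mathcal F(fg)=\mathcal F f*\mathcal F g$) yields
\[
    \sum_\gamma \frac{|(\mathcal F f*\mathcal F g)(\gamma)|^2}{\lambda_\tau(\gamma)} \;\le\; C_\tau^2\,\lVert f\rVert_{\mathcal H_\tau}^2\,\lVert g\rVert_{\mathcal H_\tau}^2.
\]
Restricting the left-hand sum to its $\gamma_0$-term and testing with the positive pair $\mathcal F f(\gamma')=\mathcal F g(\gamma')=\sqrt{\lambda_\tau(\gamma')\,\lambda_\tau(\gamma_0-\gamma')}$---a choice engineered so that $\lVert f\rVert_{\mathcal H_\tau}^2=\lVert g\rVert_{\mathcal H_\tau}^2=\lVert\lambda_\tau\rVert_{L^1(\hat G)}$ (finite since $\lambda_\tau\in L^1$) and $(\mathcal F f*\mathcal F g)(\gamma_0)=(\lambda_\tau*\lambda_\tau)(\gamma_0)$---produces the preliminary bound
\[
    (\lambda_\tau*\lambda_\tau)(\gamma_0) \;\le\; C_\tau\,\lVert\lambda_\tau\rVert_{L^1(\hat G)}\,\sqrt{\lambda_\tau(\gamma_0)} \;=\; C_\tau\,\lVert\lambda_\tau\rVert_{L^1(\hat G)}\,\lambda_{\tau/2}(\gamma_0),
\]
the final equality invoking the semigroup identity $\lambda_\tau=\lambda_{\tau/2}^2$. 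This falls short of subconvolutivity by exactly one factor of $\lambda_{\tau/2}(\gamma_0)\in(0,1]$.

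\textbf{Bootstrap and main obstacle.} To close the gap, I would couple the preliminary bound with the pointwise estimate $\lambda_{\tau/2}(\gamma')\lambda_{\tau/2}(\gamma-\gamma')\le C_{\tau/2}^2\,\lambda_{\tau/2}(\gamma)$---extracted by applying the Banach-algebra inequality at time $\tau/2$ to a pair of characters---and the identity $\lambda_\tau=\lambda_{\tau/2}^2$ to derive
\[
    (\lambda_\tau*\lambda_\tau)(\gamma) = \sum_{\gamma'}\lambda_{\tau/2}(\gamma')^2\lambda_{\tau/2}(\gamma-\gamma')^2 \;\le\; C_{\tau/2}^2\,\lambda_{\tau/2}(\gamma)\,(\lambda_{\tau/2}*\lambda_{\tau/2})(\gamma).
\]
Substituting the preliminary bound at $\tau/2$ on the right-hand side replaces the exponent $\tau/2$ by $3\tau/4$, and iterating $n$ times pushes the exponent to $(1-2^{-n})\tau$, whose pointwise limit is $\lambda_\tau(\gamma)$. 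The crux---and the main obstacle---is to keep the accumulated multiplicative constant finite as $n\to\infty$: each iteration brings in $C_{\tau/2^k}$ and $\lVert\lambda_{\tau/2^k}\rVert_{L^1(\hat G)}$ at successively smaller $\tau/2^k$, both of which may diverge as $\tau/2^k\to 0$ on infinite $\hat G$. A successful proof will therefore require either a sharper first-pass estimate---ideally one that replaces the factor $\lVert\lambda_\tau\rVert_{L^1(\hat G)}$ by something uniformly bounded, exploiting the positivity and $L^1$-normalization $\int_G l_\tau\,d\mu=1$ of the Markov transition density $l_\tau$---or an entirely different route, such as a tight Schur-test characterization of the convolution operator $T_{\lambda_\tau}:a\mapsto\lambda_\tau*a$ on $L^2(\hat G,\lambda_\tau^{-1})$ that bypasses the iteration.
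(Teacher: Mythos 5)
Your forward direction is correct and coincides with the paper's: the Markov conditions~\eqref{eqMarkov} automatically supply strict positivity and $L^1(\hat G)$ self-adjointness, so subconvolutivity places $\lambda_\tau$ under Theorem~\ref{thmMain}. The necessity direction, however, contains a genuine gap, which you have honestly flagged but not closed. Your first-pass computation is valid as far as it goes (the test pair is admissible, since $\lVert f \rVert_{\mathcal H_\tau}^2 = \sum_{\gamma'} \lambda_\tau(\gamma_0 - \gamma') = \lVert \lambda_\tau \rVert_{L^1(\hat G)} < \infty$), but it only yields $(\lambda_\tau * \lambda_\tau)(\gamma_0) \leq C_\tau \lVert \lambda_\tau \rVert_{L^1(\hat G)} \, \lambda_{\tau/2}(\gamma_0)$, and the lost factor $\lambda_{\tau/2}(\gamma_0)/\lambda_\tau(\gamma_0) = 1/\lambda_{\tau/2}(\gamma_0)$ is unbounded over $\hat G$. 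The bootstrap cannot repair it: each iteration inserts a factor on the order of $C_{\tau/2^k}\lVert \lambda_{\tau/2^k}\rVert_{L^1(\hat G)}$, and $\lVert \lambda_\epsilon \rVert_{L^1(\hat G)} \to \infty$ as $\epsilon \to 0^+$ whenever $\hat G$ is infinite (for the subexponential family on $\mathbb Z^d$ one has $\lVert \lambda_\epsilon \rVert_{L^1(\hat G)} \sim \epsilon^{-d/p}$), while nothing in the hypotheses controls $C_{\tau/2^k}$ as $k \to \infty$. So the accumulated constant diverges and the exponent-pushing scheme fails; the ``only if'' half remains unproven.

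The idea you are missing is not a sharper one-step estimate at a fixed time but a use of the family structure itself: since by hypothesis \emph{every} $\mathcal H_\tau$ is an RKHA, it suffices to prove subconvolutivity of the half-time function $\xi_\tau := \lambda_\tau^{1/2} = \lambda_{\tau/2}$ and then relabel $\tau \mapsto 2\tau$ (the paper sets $C_\tau = \tilde C_{2\tau}$); this makes your lost half-power disappear identically, because the square root of $\lambda_\tau$ is again a member of the family. Concretely, the paper tests the algebra property of $\mathcal H_\tau$ with $f = g$ whose Lemma~\ref{lem:U} representative is $\hat u = \hat v = \xi_\epsilon$, i.e., $\mathcal F f = \xi_\tau \xi_\epsilon = \xi_{\tau+\epsilon}$, which is admissible since compactness of $G$ gives $\xi_\epsilon \in L^1(\hat G) \subseteq L^2(\hat G)$. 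Lemma~\ref{lem:U} applied to $fg$ (via Lemma~\ref{lemXi}) then produces $\hat w_\epsilon \in L^2(\hat G)$ with $\xi_\tau \hat w_\epsilon = \xi_{\tau+\epsilon} * \xi_{\tau+\epsilon}$, and discreteness of $\hat G$ converts $L^2$ membership into a pointwise bound, $\lVert \hat w_\epsilon \rVert_{L^\infty(\hat G)} \leq \lVert \hat w_\epsilon \rVert_{L^2(\hat G)} \leq \tilde C_\tau$, with the monotone increase of $\epsilon \mapsto (\xi_{\tau+\epsilon}*\xi_{\tau+\epsilon})(\gamma)$ as $\epsilon \to 0^+$ supplying the uniformity in $\epsilon$ and the passage to the limit $(\xi_\tau * \xi_\tau)(\gamma) \leq \tilde C_\tau \, \xi_\tau(\gamma)$. (The $\epsilon$-shift is needed precisely because the $\epsilon = 0$ choice $\hat u = 1_{\hat G}$ does not lie in $L^2(\hat G)$, i.e., $l_{\tau/2} \notin \mathcal H_\tau$.) Your preliminary bound is the same phenomenon seen from the wrong side: it controls $\lambda_\tau * \lambda_\tau$ by the \emph{earlier}-time function $\lambda_{\tau/2}$, whereas restating the goal one step down the semigroup removes the mismatch; a Schur-test characterization at a single fixed $\tau$, as you suggest, would face the same obstruction, since the semigroup law is the one ingredient your argument never exploits beyond the identity $\lambda_\tau = \lambda_{\tau/2}^2$.
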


\begin{proof}
    See Section~\ref{secProofMarkov}. \phantom\qedhere
\end{proof}

The subexponential functions from~\eqref{eqSubexp}, 
\begin{displaymath}
    \lambda_\tau(\gamma) = e^{-\tau \lvert \gamma \rvert^p}, 
\end{displaymath}
are a concrete example satisfying the assumptions of Theorem~\ref{thmMarkov} for the $d$-torus, $G = \mathbb T^d$, $\hat G = \mathbb Z^d$. In the case of the circle, $\mathbb T^1$, the Markov generator $ \mathcal D $ is a fractional diffusion operator given by the $p/2$-th power of the Laplacian, $\mathcal D = \Delta^{p/2}$.

\subsection{\label{secProofMarkov}Proof of Theorem~\ref{thmMarkov}}

The ``if'' part of the theorem follows directly from Theorem~\ref{thmMain}. To prove the ``only if'' part, suppose that $\{ \mathcal H_\tau \}_{\tau > 0 }$ is a 1-parameter family of RKHAs associated with the functions $\{ \lambda_\tau \}_{\tau>0}$ satisfying~\eqref{eqMarkov}. We show that the $\lambda_\tau$ are subconvolutive. To that end, letting $\xi_\tau = \lambda_\tau^{1/2} = \lambda_{\tau/2}$, we make use of the following result. 

\begin{lemma}
    \label{lemXi}
    If $\mathcal H_\tau$ is a Banach algebra under pointwise multiplication of functions, then for every $\hat u, \hat v \in L^2(\hat G)$ there exists (a unique) $\hat w \in L^2(\hat G) $ such that
    \begin{displaymath}
        \xi_\tau \hat w = (\xi_\tau \hat u) * (\xi_\tau \hat v).
    \end{displaymath}
\end{lemma}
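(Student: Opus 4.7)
The plan is to use Lemma~\ref{lem:U} to transfer the statement from $L^2(\hat G)$ to $\mathcal H_\tau$, exploit the Banach algebra hypothesis there, and then transfer the resulting identity back via the Fourier transform.

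First, given $\hat u, \hat v \in L^2(\hat G)$, I would invoke the (constructive) direction of Lemma~\ref{lem:U} that guarantees the existence of unique functions $f, g \in \mathcal H_\tau$ with $\mathcal F f = \xi_\tau \hat u$ and $\mathcal F g = \xi_\tau \hat v$, and with norms $\lVert f\rVert_{\mathcal H_\tau} = \lVert \hat u\rVert_{L^2(\hat G)}$ and $\lVert g\rVert_{\mathcal H_\tau} = \lVert \hat v\rVert_{L^2(\hat G)}$; here I use that $\xi_\tau = \lambda_{\tau/2}$ is strictly positive, so that $\xi_\tau^+ = 1/\xi_\tau$ gives $\hat u = \xi_\tau^+ \mathcal F f$ and similarly for $\hat v$.

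Next, the Banach algebra hypothesis applied to $\mathcal H_\tau$ yields $fg \in \mathcal H_\tau$. Applying Lemma~\ref{lem:U} in the opposite direction, there exists a unique $\hat w \in L^2(\hat G)$, namely $\hat w = \xi_\tau^+ \mathcal F(fg)$, satisfying $\mathcal F(fg) = \xi_\tau \hat w$. Uniqueness here is immediate from the strict positivity of $\xi_\tau$.

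Finally, it remains to identify $\mathcal F(fg)$ with $(\xi_\tau \hat u) * (\xi_\tau \hat v)$. Since $G$ is compact, $\mathcal H_\tau \subseteq C(G) \subseteq L^2(G)$, so $f, g \in L^2(G)$ and $fg \in L^1(G)$. The convolution theorem on $L^2(G)$ (with values in the discrete pointwise convolution on $\hat G$, exactly as employed in the proof of Theorem~\ref{thmMain}) yields
\begin{displaymath}
    \mathcal F(fg)(\gamma) = (\mathcal F f * \mathcal F g)(\gamma) = \bigl((\xi_\tau \hat u) * (\xi_\tau \hat v)\bigr)(\gamma),
\end{displaymath}
and combining this with $\mathcal F(fg) = \xi_\tau \hat w$ gives the claimed identity. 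I do not anticipate any real obstacle here; the only mild subtlety is making sure that the pointwise convolution on the discrete group $\hat G$ is well-defined for the relevant $L^2$ arguments, which is handled as in the estimates already carried out for Theorem~\ref{thmMain} (where convolutions of $L^2(\hat G)$ functions are represented via inner products with shifts).
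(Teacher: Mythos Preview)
Your proposal is correct and follows essentially the same route as the paper: invoke Lemma~\ref{lem:U} to produce $f,g\in\mathcal H_\tau$ with $\mathcal F f=\xi_\tau\hat u$, $\mathcal F g=\xi_\tau\hat v$, use the Banach algebra hypothesis to get $fg\in\mathcal H_\tau$, apply Lemma~\ref{lem:U} again to obtain $\hat w$, and conclude via $\mathcal F(fg)=(\mathcal F f)\ast(\mathcal F g)$. The only difference is that you spell out the justification for the convolution identity and the uniqueness of $\hat w$ more explicitly than the paper does.
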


\begin{proof}
    By Lemma~\ref{lem:U}, there exist (unique) $f, g \in \mathcal H_\tau$ such that $\mathcal F f = \xi_\tau \hat u$ and $\mathcal F g = \xi_\tau \hat v$. Moreover, $fg$ lies in $\mathcal H_\tau$, so again by Lemma~\ref{lem:U} there exists $\hat w \in L^2(\hat G)$ such that $\mathcal F(fg) = \xi_\tau \hat w$. The claim follows from the fact that $\mathcal F(fg) = (\mathcal F f) * (\mathcal F g) $. 
\end{proof}

Since $G$ is compact, for any $\epsilon > 0 $ we have $ \xi_\epsilon \in L^1(\hat G) \subseteq L^2(\hat G) $, so setting $\hat u = \hat v = \xi_\epsilon$ in Lemma~\ref{lemXi}, it follows that there exist $\hat w_\epsilon \in L^2(\hat G) $ such that 
\begin{displaymath}
    \xi_\tau \hat w_{\epsilon} = \xi_{\tau+\epsilon} * \xi_{\tau+\epsilon}. 
\end{displaymath}
In particular, we have 
\begin{displaymath}
    \hat{\mathcal F}(\xi_{\tau+\epsilon} * \xi_{\tau+\epsilon}) = (\hat{\mathcal F} \xi_{\tau+\epsilon})  (\hat{\mathcal F} \xi_{\tau+\epsilon}) = l_{(\tau+\epsilon)/2}^2.  
\end{displaymath}

Note now that for each $\gamma \in \hat G$, $ \epsilon \mapsto (\xi_{\tau+\epsilon} * \xi_{\tau+\epsilon})(\gamma) $ is a continuous function that increases monotonically as $\epsilon \to 0^+$ to $ (\xi_{\tau} * \xi_{\tau})(\gamma) $. As a result, the family $\{ \hat w_\epsilon \}_{\epsilon > 0 }$ is bounded in $L^2(\hat G)$ norm and thus in $L^\infty(\hat G)$ norm (since $ \lVert \cdot \rVert_{L^\infty(\hat G)} \leq \lVert \cdot \rVert_{L^2(\hat G)}$ by compactness of $G$) by $ \lVert \xi_\tau * \xi_\tau \rVert_{L^2(\hat G)}$. 

By the above, for every $\tau > 0 $, there exists a constant $\tilde C_\tau$ such that for every $\epsilon > 0 $ and $\gamma \in \hat G$ we have
\begin{displaymath}
    \hat w_\epsilon( \gamma ) = \frac{(\xi_{\tau+\epsilon} * \xi_{\tau+\epsilon})(\gamma)}{\xi_\tau(\gamma)} \leq \tilde C_\tau.
\end{displaymath}
Taking the limit $\epsilon \to 0^+$, we obtain
\begin{displaymath}
    (\xi_\tau * \xi_\tau)( \gamma ) \leq \tilde C_\tau \xi_\tau(\gamma),
\end{displaymath}
and since $\xi_\tau = \lambda_{\tau/2}$ and $\tau$ was arbitrary, we conclude that
\begin{displaymath}
    (\lambda_\tau * \lambda_\tau)(\gamma) \leq C_\tau \lambda_\tau(\gamma),
\end{displaymath}
where $C_\tau = \tilde C_{2\tau}$. This verifies the subconvolutive property of the $\lambda_\tau$. \qed

\section{Fourier--Wermer algebras} \label{secWienerAlg}

Motivated by applications to high-dimensional function approximation, we end the paper with a discussion on the inclusion relationships between RKHAs and Fourier--Wermer algebras on compact abelian groups associated with subconvolutive weights. 

Given a positive weight $w: \hat G \to \mathbb R_+$, consider the space 
\begin{equation*}
    \mathcal A_w = \left\{ f \in L^1(G): \sum_{\gamma \in \hat G} w(\gamma)\lvert \mathcal F f(\gamma) \rvert < \infty \right\},
\end{equation*}
equipped with the norm $\lVert f \rVert_{\mathcal A_w} = \sum_{\gamma \in \hat G}  w(\gamma) \lvert \mathcal F f(\gamma) \rvert $. As mentioned in Section~\ref{sec:intro}, in the case $ G = \mathbb T^d$, spaces in the class $\mathcal A_w$ are Fourier--Wermer algebras that have been employed in high-dimensional ($d \gg 1$) function approximation methods \cites{KolomoitsevEtAl21,NguyenEtAl22}. In more detail, assuming (as we will henceforth do) that $w$ is bounded away from 0, every space $\mathcal A_w$ embeds continuously into the Wiener algebra $\mathcal A(G)$, i.e., the Banach algebra of functions on $G$ with absolutely convergent Fourier series,     
\begin{equation*}
    \mathcal A(G) = \left\{ f \in L^1(G): \sum_{\gamma \in \mathbb Z^d} \lvert \mathcal F f(\gamma) \rvert < \infty \right \}, \quad \lVert f \rVert_{\mathcal A(G)} := \lVert \mathcal F f \rVert_{L^1(\hat G)}. 
\end{equation*}
In particular, $\mathcal A_w$ may be identified with the image of $L^1_w(\hat G)$ under the inverse Fourier operator, $\mathcal A_w = \hat{\mathcal F} (L^1_w(\hat G))$, and can thus  be understood as a Banach space of continuous functions whose regularity depends on the weight function $w$. Here, and in what follows, we let $L^p_w(\hat G)$, $p \in (1,\infty)$, be the Banach space on $\hat G$ equipped with the norm $\lVert \hat f \rVert_{L^p_w(\hat G)} = ( \sum_{\gamma \in \hat G} ( w(\gamma) \lvert \hat f(\gamma)\rvert)^p )^{1/p} $.  

An important problem in numerical analysis is the approximation of functions $f$ in an input space $X$ such as $\mathcal A_w(\mathbb T^d)$ by elements $f_n$ in subspaces of finite dimension, such that for a given $n$ the residual $f - f_n$ has low norm, uniformly over $X$, with respect to a target space $Y$ into which $X$ is continuously embedded (e.g., $L^\infty(\mathbb T^d)$, $L^2(\mathbb T^d)$, or $\mathcal A(\mathbb T^d)$). Typically, the error of such approximations is measured using $s$-numbers for the embedding $\iota: X \hookrightarrow Y$ \cite{Pietsch87}, which can be thought of as generalizations of the singular values of $\iota$ when $X$ and $Y$ are Hilbert spaces and $\iota$ is compact. In high-dimensional applications, of particular interest is the dependence of the $s$-numbers on $d\gg 1$. Intuitively, one seeks to take advantage of the regularity properties of $X$ to alleviate the ``curse of dimension'' suffered by finite-rank approximation of arbitrary elements of the target space $Y$. 

The recent paper \cite{NguyenEtAl22} has shown that for weights $w = w_{s,r}$ in the class of dominating mixed smoothness \eqref{eqWSR} the optimal approximation error from $X = \mathcal A_w(\mathbb T^d)$ scales as $n^{-s} (\log n)^{s(d-1)}$ when the output space is $Y = \mathcal A(\mathbb T^d)$. However, the question of whether $\mathcal A_w(\mathbb T^d)$ has Banach algebra structure is left open. An affirmative answer to that question would present additional opportunities to build approximation schemes that leverage algebra structure; see, e.g., \cite{FeichtingerEtAl_minimal_2007} for an example in the setting of harmonic Hilbert spaces.   

\subsection{Algebra structure of $\mathcal A_w$}

Recall the subconvolutivity condition~\eqref{eqSubconvW} that implies \cite{Kuznetsova06} that $L^p_w(\hat G)$ is a convolution algebra on the dual group. Requiring that this condition holds for $p=1$, 
\begin{equation}
    \label{eqSubconvW1}
    (w^{-1} * w^{-1})(\gamma) \leq C w^{-1}(\gamma), \quad \forall \gamma,\gamma' \in \hat G,
\end{equation}
and using the inverse Fourier operator to pass to the primal group $G$, we can deduce that $\mathcal A_w = \hat{\mathcal F}(L^1_w(\hat G))$ is a Banach algebra under pointwise multiplication. This algebra is a dense subalgebra of the Wiener algebra $\mathcal A(G)$. Moreover, the spectrum of $L^1_w(\hat G)$ contains a homeomorphic image of $G$ \cite{Kuznetsova06}*{Theorem~4}, which implies that $\sigma(\mathcal A_w)$ has the same property.  

Note that the weights $w_{s,r}$ satisfy~\eqref{eqSubconvW1} for any $s \geq 2$ and $ r > 0$. Indeed, in dimension $d=1$ the weight $ \tilde w_{s,r}(\gamma) := (1 + \lvert\gamma\rvert^r)^{s/r} $, $\gamma \in \mathbb Z$, is subadditive and $ \tilde w_{s,r}^{-1}$ lies in $L^1(\mathbb Z)$, so $w_{s,r}^{-1}$ is subconvolutive (see \cite{Feichtinger79}). Since, in any dimension $d\in \mathbb N$, $w_{s,r}^{-1}$ is built up as the product $w^{-1}_{s,r}(\gamma) = \prod_{i=1}^d \tilde w_{s,r}^{-1}(\gamma_i)$ with $\gamma = (\gamma_1, \ldots, \gamma_d)$, it follows that $w_{s,r}$ satisfies~\eqref{eqSubconvW1}, so $\mathcal A_{w_{r,s}}$ is a Banach algebra on $\mathbb T^d$.    

For completeness, we note that the algebra property of $\mathcal A_w$ also holds for subadditive weights (without requiring that $w^{-1}$ lies in $L^1(\hat G)$). In this case, we can additionally deduce that the spectrum $\sigma(\mathcal A_w)$ is homeomorphic to $G$. 

Recall Theorems~3 and~4 in \cite{Brandenburg1975}, which collectively imply that if $\hat G$ is discrete (which is the case here since $G$ is compact), and $w : \hat G \to \mathbb R$ satisfies $w \geq 1$ and~\eqref{eqSubadd}, then $L^1(\hat G) \cap L^\infty_w(\hat G)$ is a Banach convolution algebra with a homeomorphic spectrum to $G$. It has been pointed out to us by Feichtinger~\cite{Feichtinger22} that this result readily generalizes to $\mathcal L_w^p(\hat G) := L^1(\hat G) \cap L^p_w(\hat G)$ with $p \in [1,\infty]$ and the norm $\lVert f \rVert_{\mathcal L^p_w(\hat G)} := \lVert f \rVert_{L^1(\hat G)} + \lVert f \rVert_{L^p_w(\hat G)}$. Indeed, for a discrete group $\hat G$, the subadditivity property~\eqref{eqSubadd} implies
\begin{equation*}
    w(\gamma) \leq C (w(\gamma') + w(\gamma-\gamma')), \quad \forall \gamma,\gamma' \in \hat G,
\end{equation*}
leading to the pointwise estimate
\begin{align*}
    \lvert w(f*g)\rvert(\gamma) &\leq C\int_{\hat G}(w(\gamma') + w(\gamma-\gamma')) \lvert f(\gamma') g(\gamma-\gamma') \rvert \, d\gamma'\\
                                &\leq C\left((\lvert w f \rvert * \lvert g \rvert)(\gamma) + (\lvert f\rvert * \lvert wg \rvert)\right)(\gamma),
\end{align*}
which holds for every $f,g \in \mathcal L^p_w(\hat G)$ and $\gamma \in \hat G$. From the above, we get
\begin{align}
    \nonumber\lVert f * g \rVert_{L^p_w(\hat G)} &\leq C \left\lVert \lvert wf \rvert * \lvert g \rvert + \lvert f \rvert * \lvert w g \rvert \right\rVert_{L^p(\hat G)} \\
    \nonumber &\leq C \left( \lVert wf \rVert_{L^p(\hat G)} \lVert g \rVert_{L^\infty(\hat G)} + \lVert f \rVert_{L^\infty(\hat G)} \lVert w g \rVert_{L^p(\hat G)} \right)\\ 
    \label{eqLPW} &\leq C \left( \lVert f \rVert_{L^p_w(\hat G)} \lVert g \rVert_{L^1(\hat G)} + \lVert f \rVert_{L^1(\hat G)} \lVert g \rVert_{L^p_w(\hat G)} \right), 
\end{align}
and thus $\lVert f * g \rVert_{L^p_w(\hat G)} \leq 2 C \lVert f \rVert_{\mathcal L^p_w(\hat G)}\lVert g \rVert_{\mathcal L^p_w(\hat G)}$. Moreover, since $\lVert f * g \rVert_{L^1(\hat G)} \leq \lVert f \rVert_{L^1(\hat G)} \lVert g \rVert_{L^1(\hat G)}$, we have,
\begin{equation*}
    \lVert f * g \rVert_{\mathcal L^p_w(\hat G)} \leq (2C + 1) \lVert f \rVert_{\mathcal L^p_w(\hat G)}\lVert g \rVert_{\mathcal L^p_w(\hat G)},
\end{equation*}
and we conclude that $\mathcal L^p_w(\hat G)$ is a Banach convolution algebra (cf.\ \cite{Brandenburg1975}*{Theorem~3}).

Next, it follows from~\eqref{eqLPW} that for any $n\in\mathbb N$
\begin{equation}
    \lVert f^{*2n} \rVert_{L^p_w(\hat G)} \leq 2 C \lVert f^{*n} \rVert_{L^1(\hat G)} \lVert f^{*n} \rVert_{L^p_w(\hat G)} \leq 2 C \lVert f^{*n} \rVert_{L^1(\hat G)} \lVert f^{*n} \rVert_{\mathcal L^p_w(\hat G)}.
\end{equation}
By \cite{Brandenburg1975}*{Theorem~2}, the above is a sufficient condition for $\mathcal L^p_w(\hat G)$ and $L^1(\hat G)$ to have equal spectra, 
\begin{equation}
    \label{eqSpecConv}
    \sigma(\mathcal L^p_w(\hat G)) = \sigma(L^1(\hat G)).
\end{equation}
Therefore, setting $p=1$ in~\eqref{eqSpecConv} (in which case $\mathcal L^1_w(\hat G) = L^1_w(\hat G)$ since $w\geq 1$), and passing to the primal group by Fourier transforms, we obtain
\begin{equation}
    \label{eqWienerSpec}
    \sigma(\mathcal A_w) = \sigma(\hat{\mathcal F}L^1_w(\hat G)) = \sigma(\hat{\mathcal F}L^1(\hat G)) = \sigma(C(G)).
\end{equation}
Thus, we can conclude that the Fourier--Wermer algebras $\mathcal A_w$ associated with subadditive weights have the same spectra as $C(G)$, analogously to the result in Theorem~\ref{thmSpec} for RKHAS $\mathcal H_{\lambda}$ associated with subconvolutive weights. 

\subsection{Embedding relationships with RKHAs} 

We now consider the case where $w$ and $\lambda = w^{-2}$ satisfy~\eqref{eqSubconvW1} and the assumptions of Theorem~\ref{thmMain}
, respectively, so that $\mathcal A_w$ and $\mathcal H_\lambda$ are both Banach algebras. This will be the case, for instance, for $w = w_{s,r}$ with $ s \geq 2$, as well as the weights $w = \lambda_\tau^{-1/2}$ obtained from Markov semigroups as in Section~\ref{secMarkov} (which include the subexponential weights $w = \lambda^{-1/2}$ from~\eqref{eqSubexp} as a special case). Inclusion relationships between $\mathcal A_w$ and $\mathcal H_\lambda$ are particularly interesting when both of these spaces are algebras, as they induce algebra representations via the corresponding multiplication operators.  

First, it is straightforward to deduce that $\mathcal A_w$ embeds continuously into $\mathcal H_\lambda$, and the operator norm of the embedding is equal to 1 \cite{NguyenEtAl22}. Indeed, defining the linear operators $A: \mathcal A_w \to L^1(\hat G)$ and $\tilde B: \mathcal H_\lambda \to L^2(\hat G)$ such that
\begin{gather*}
    A f(\gamma) = w(\gamma) \mathcal F f(\gamma), \quad \tilde B \hat f(x) = \sum_{\gamma \in \hat G} w^{-1}(\gamma) \gamma(x) \hat f(\gamma),
\end{gather*}
it follows by direct calculation that the following diagram commutes,
\begin{equation*}
    \begin{tikzcd} 
        \mathcal A_w \ar[r,"\Id"] \ar[d,"A"] & \mathcal H_\lambda\\
        L^1(\hat G) \ar[r,"\iota"] & L^2(\hat G) \ar[u,"\tilde B"]
    \end{tikzcd},
\end{equation*}
where $\Id$ is the identity map on functions and $\iota$ is the inclusion map. One also verifies that $A$ and $\tilde B$ have unit operator norm (in fact, $\tilde B$ is unitary), and since $\lVert 1_G \rVert_{\mathcal A_w} = \lVert 1_G \rVert_{\mathcal H_\lambda} = 1$, the embedding $ \mathcal A_w \hookrightarrow \mathcal H_\lambda$ has unit operator norm. It then follows that the map $\pi : \mathcal A_w \to B(\mathcal H_\lambda)$, where $\pi f$ is the multiplication operator by $f$, provides a strongly continuous, faithful representation of $\mathcal A_w$ into $B(\mathcal H_\lambda)$.    

Next, turning to embeddings of the opposite direction, in general we cannot expect a continuous embedding of $\mathcal H_\lambda$ into $\mathcal A_w$ since, unless $G$ is finite, $L^1(\hat G)$ is a strict subspace of $L^2(\hat G)$. Nevertheless, under appropriate assumptions, giving up a small amount of regularity $\epsilon$ is sufficient to obtain a continuous embedding of $\mathcal H_{\lambda^{1+\epsilon}}$ into $\mathcal A_w$. Here, we assume that the weight $w$ is chosen such that $\lambda^\epsilon = w^{-2\epsilon}$ satisfies the assumptions of Theorem~\ref{thmMain} for any $\epsilon>0$. This will hold, for instance, for the ``semigroup'' weights $w = \lambda_\tau^{-1/2}$ from Section~\ref{secMarkov} but not the $w_{s,r}$ family~\eqref{eqWSR}. 

For any $\epsilon >0 $ let us define the linear map $D_\epsilon : L^2(\hat G) \to L^1(\hat G)$ such that 
\begin{equation*}
    D_\epsilon \hat f = w^{-\epsilon} \hat f;
\end{equation*}
this map has operator norm $ \lVert D_\epsilon \rVert \leq \lVert w^{-\epsilon} \rVert_{L^1(\hat G)} $. Defining also $\tilde A : L^1(\hat G) \to \mathcal A_w $ and $ B_\epsilon : \mathcal H_{\lambda^{1+\epsilon}} \to L^2(\hat G)$ as 
\begin{gather*}
    \tilde A \hat f(x) = \sum_{\gamma \in \hat G} w^{-1}(\gamma) \gamma(x) \xi(\gamma), \quad B_\epsilon f(\gamma) = w^{1+\epsilon}(\gamma) \mathcal F f(\gamma), 
\end{gather*}
where $\lVert \tilde A \rVert = \lVert B_\epsilon \rVert = 1$ and $B_\epsilon$ is unitary, leads to the following commutative diagram:
\begin{equation*}
    \begin{tikzcd} 
        \mathcal H_{\lambda^{1+\epsilon}} \ar[r,"\Id"] \ar[d,"B_\epsilon"] & \mathcal A_w\\
        L^2(\hat G) \ar[r,"D_\epsilon"] & L^1(\hat G) \ar[u,"\tilde A"]
    \end{tikzcd}. 
\end{equation*}
We thus conclude that the embedding $\mathcal H_{\lambda^{1+\epsilon}} \hookrightarrow \mathcal A_w$ is continuous for any $\epsilon > 0$.

\appendix

\section{Proof of Lemma~\ref{lem:LCA}} \label{sec:proof:LCA}

\subsection*{Claim (i)} To show that $k $ is uniformly continuous, note first that the kernel shape function $ l $ lies in $C_0(G)$, and is thus uniformly continuous. As a result, for every $ \epsilon > 0 $ and $ (x,x') \in G \times G$  there exists a neighborhood $U$ of the identity element of $ G $ such that
\[ \lvert l( x - x' ) - l(z) \rvert < \epsilon, \quad \forall z \in ( x - x' ) +  U. \]
Therefore, defining the open neighborhood $V = \{ (y,y') \in G \times G : y - y' \in U \} $ of the identity of $ G \times G$, we get
\[\lvert l( x - x' ) - l( y - y' ) \rvert = \lvert k(x,x') - k(y,y') \rvert < \epsilon, \quad \forall (y,y') \in (x,x') + V, \]
which proves that $k $ is uniformly continuous.

To show that $\RKHS_\lambda$ is a subspace of $C_0(G)$, note that every $ f \in  \RKHS_\lambda $ is the $\RKHS_\lambda$-norm limit of finite linear combinations of kernel sections of the form $ f_n = \sum_{j=0}^{n-1} c_j k( x_j, \cdot )$, where $ k( x_j, \cdot )$ lies in $C_0(G)$. Moreover, proceeding similarly to \cite{FerreiraMenegatto2013}*{Lemma~2.1}, we have
\[ |f(x)| = \abs{ \langle k(x, \cdot), f \rangle_{\RKHS_\lambda} } \leq \norm{ k(x, \cdot) }_{\RKHS_\lambda} \norm{f}_{\RKHS_\lambda} = \sqrt{k(x,x)} \norm{f}_{\RKHS_\lambda} \leq  \lVert l \rVert_{C_0(G)}^{1/2} \lVert f \rVert_{\RKHS_\lambda}, \]
and thus
\[ \norm{f}_{\sup} \leq \norm{l}_{C_0(G)}^{1/2}  \norm{f}_{\RKHS_\lambda}. \]
The above implies that the Cauchy sequence $f_n \in \RKHS_\lambda \cap C_0(G) $ converging to $ f \in \RKHS_\lambda$ is also Cauchy with respect to $C_0(G)$ norm, so $ f $ lies in $C_0(G)$. This proves Claim~(i).

\subsection*{Claim~(ii)} Since $\lambda \in L^1(\hat G)$ and $l \in L^1(G)$, we have $ l = \hat{\mathcal F} \lambda \in L^1(G) \cap C_0(G)$, which implies that for every $x \in G$, $ k( x, \cdot ) = S^x l$ lies in $L^1(G) \cap C_0(G)$ Therefore, for every $f\in L^\infty(G)$, $K f(x) = \int_G k(x,\cdot) f\, d\mu$ exists for every $x\in G$, and we have
    \begin{displaymath}
        \lvert K f(x) \rvert = \left \lvert \int_G k(x,\cdot) f \, d\mu \right \rvert \leq \lVert S^x l \rVert_{L^1(G)} \lVert f \rVert_{L^\infty(G)} = \lVert l \rVert_{L^1(G)} \lVert f \rVert_{L^\infty(G)}.
    \end{displaymath}
This shows that $K $ is well-defined as a bounded linear map from $L^\infty(G)$ to the space of bounded functions on $ G$, as claimed.

Next, we have $\lVert l \rVert^2_{L^2(G)} \leq \lVert l \rVert_{C_0(G)} \lVert l \rVert_{L^1(G)}$, which implies that $l$ lies in $L^2(G)$. Thus, for every $f \in L^2(G)$ we can express $ K f ( x ) $ as the inner product 
\begin{displaymath}
    K f( x ) = \langle k(x,\cdot), f \rangle_{L^2(G)} = \langle S^x l, f \rangle_{L^2(G)}.
\end{displaymath}
Therefore, for any $x, y \in G$, we obtain 
\begin{align*}
    \lvert K f( x) - K f( y )  \rvert^2 &\leq  \lVert S^x l - S^y l \rVert_{C_0(G)} \lVert S^x l - S^y l \rVert_{L^1(G)} \lVert f \rVert_{L^2(G)}^2 \\
    &\leq 2 \lVert S^x l - S^y l \rVert_{C_0(G)} \lVert l \rVert_{L^1(G)} \lVert f \rVert_{L^2(G)}^2,    
\end{align*}
The uniform continuity of $ K f $ then follows by the strong continuity of $S^x$, using a neighborhood of the identity of $ G \times G $ analogous to $V$ in the proof of Claim~(i). 

Finally, the uniform continuity of $ K f $ for $ f \in L^1(G)$ follows from a similar argument using the bound
\begin{displaymath}
    \lvert K f( x) - K f( y ) \rvert \leq  \lVert k( x, \cdot ) - k(y, \cdot ) \rVert_{C_0(G)} \lVert f \rVert_{L^1(G)}.    
\end{displaymath}
This completes the proof of Claim~(ii).

\subsection*{Claim~(iii)} The claim is a direct consequence of~\eqref{eqn:FourierShift} and  the definition of $ k $ in~\eqref{eqK}, viz.
\begin{align*}
    K \gamma( x ) &= \int_G k( x, y ) \gamma( y ) \, d\mu(y) = \int_G S^x l(-y ) \gamma( y ) \, d\mu(y) = \int_G S^x l( y ) \gamma( -y ) \, d\mu(y) \\
    &= \Fourier(S^x l)(\gamma) = \gamma(x) ( \Fourier l )(\gamma) = \gamma(x) \lambda(\gamma).
\end{align*}

\subsection*{Acknowledgments} This research was supported by NSF grant DMS 1854383, ONR MURI grant N00014-19-1-242, and ONR YIP grant N00014-16-1-2649. The authors would like to thank H.\ G.\ Feichtinger for stimulating discussions that led to improvement of the manuscript from an earlier version. 


\begin{bibdiv}
\begin{biblist}

\bib{Ambrose45}{article}{
      author={Ambrose, W.},
       title={Structure theorems for a special class of {B}anach algebras},
        date={1945},
     journal={Trans. Amer. Math. Soc.},
      volume={57},
      number={3},
       pages={364\ndash 386},
}

\bib{Babuska68a}{article}{
      author={Babu{\v s}ka, I.},
       title={{\"U}ber universal optimale {Q}uadraturformeln, {T}eil {I}},
        date={1968},
     journal={Appl. Math.},
      volume={13},
       pages={304\ndash 338},
}

\bib{Babuska68b}{article}{
      author={Babu{\v s}ka, I.},
       title={{\"U}ber universal optimale {Q}uadraturformeln, {T}eil {II}},
        date={1968},
     journal={Appl. Math.},
      volume={13},
       pages={388\ndash 404},
}

\bib{BernicotFrey2018}{article}{
      author={Bernicot, F.},
      author={Frey, D.},
       title={Sobolev algebras through a ‘{C}arr{\'e} du {C}hamp’
  identity},
        date={2018},
     journal={Proceedings of the Edinburgh Mathematical Society},
      volume={61},
      number={4},
       pages={1041\ndash 1054},
}

\bib{Brandenburg1975}{article}{
      author={Brandenburg, L.},
       title={On identifying the maximal ideals in {B}anach algebras},
        date={1975},
     journal={J. Math. Anal. App.},
      volume={50},
      number={3},
       pages={489\ndash 510},
}

\bib{BrunoEtAl2019}{article}{
      author={Bruno, T.},
      author={Peloso, M.},
      author={Tabacco, A.},
      author={Vallarino, M.},
       title={Sobolev spaces on {L}ie groups: {E}mbedding theorems and algebra
  properties},
        date={2019},
     journal={J. Func. Anal.},
      volume={276},
      number={10},
       pages={3014\ndash 3050},
}

\bib{CarmeliEtAl10}{article}{
      author={Carmeli, C.},
      author={De~Vito, E.},
      author={Toigo, A.},
      author={Umanit\`a, V.},
       title={Vector valued reproducing kernel {H}ilbert spaces and
  universality},
        date={2010},
     journal={Anal. Appl.},
      volume={08},
      number={1},
       pages={19\ndash 61},
}

\bib{DellAntonio16}{book}{
      author={Dell'Antonio, G.},
       title={Lectures on the {M}athematics of {Q}uantum {M}echanics {II}:
  {S}elected {T}opics},
   publisher={Atlantis Press},
     address={Amsterdam},
        date={2016},
}

\bib{Delvos97}{article}{
      author={Delvos, F.~J.},
       title={Interpolation in harmonic {H}ilbert spaces},
        date={1997},
     journal={Math. Model. Numer. Anal.},
      volume={31},
      number={4},
       pages={435\ndash 458},
}

\bib{Delvos02}{article}{
      author={Delvos, F.-J.},
       title={Uniform approximation by minimum norm interpolation},
        date={2002},
     journal={Electronic Trans. Numer. Anal.},
      volume={14},
       pages={36\ndash 44},
}

\bib{Dixmier1977}{book}{
      author={Dixmier, J.},
       title={{$C^*$}-{A}lgebras},
   publisher={North-Holland Mathematical Library},
     address={New York},
        date={1977},
      volume={15},
}

\bib{Edwards59}{article}{
      author={Edwards, R.~E.},
       title={The stability of weighted {L}ebesgue spaces},
        date={1959},
     journal={Trans. Amer. Math. Soc.},
      volume={93},
       pages={369\ndash 394},
}

\bib{Essen73}{article}{
      author={Ess{\'e}n, M.},
       title={Banach algebra methods in renewal theory},
        date={1973},
     journal={J. Anal. Math.},
      volume={26},
       pages={303\ndash 336},
}

\bib{FeichtingerEtAl_minimal_2007}{article}{
      author={Feichtinger, H.},
      author={Pande, S.},
      author={Werther, T.},
       title={Minimal norm interpolation in harmonic {H}ilbert spaces and
  {W}iener amalgam spaces on locally compact abelian groups},
        date={2007},
     journal={J. Math. Kyoto U.},
      volume={47},
      number={1},
       pages={65\ndash 78},
}

\bib{Feichtinger79}{article}{
      author={Feichtinger, H.~G.},
       title={Gewichtsfunktionen auf lokalkompakten {G}ruppen},
        date={1979},
     journal={Sitzber. d. {\"o}sterr. Akad. Wiss.},
      volume={188},
       pages={451\ndash 471},
}

\bib{Feichtinger80}{inproceedings}{
      author={Feichtinger, H.~G.},
       title={Banach convolution algebras of {W}iener type},
        date={1980},
   booktitle={Functions, {S}eries, {O}perators},
      volume={38},
   publisher={North Holland},
     address={Amsterdam},
       pages={509\ndash 524},
}

\bib{Feichtinger22}{misc}{
      author={Feichtinger, H.~G.},
        date={2022},
        note={Personal communication},
}

\bib{FeichtingerEtAl07}{article}{
      author={Feichtinger, H.~G.},
      author={Pandey, S.~S.},
      author={Werther, T.},
       title={Minimal norm interpolation in harmonic {H}ilbert spaces and
  {W}iener amalgam spaces on locally compact abelian groups},
        date={2007},
     journal={J. Math. Kuoto Univ.},
      volume={47},
      number={1},
       pages={65\ndash 78},
}

\bib{FeichtingerWerther04}{incollection}{
      author={Feichtinger, H.~G.},
      author={Werther, T.},
       title={Robustness of regular sampling in {S}obolev algebras},
        date={2004},
   booktitle={Sampling, wavelets and tomography},
      editor={Benedetto, J.},
   publisher={Birkh{\"a}user},
       pages={83\ndash 113},
}

\bib{FerreiraMenegatto2013}{article}{
      author={Ferreira, J.~C.},
      author={Menegatto, V.~A.},
       title={Positive definiteness, reproducing kernel {H}ilbert spaces and
  beyond},
        date={2013},
     journal={Ann. Funct. Anal.},
      volume={4},
       pages={64\ndash 88},
}

\bib{Folland95}{book}{
      author={Folland, G.~B.},
       title={A {C}ourse in {A}bstract {H}armonic {A}nalysis},
   publisher={CRC Press},
     address={Boca Raton},
        date={1995},
}

\bib{FukumizuEtAl08}{inproceedings}{
      author={Fukumizu, K.},
      author={Gretton, A.},
      author={Sch\"{o}lkopf, B.},
      author={Sriperumbudur, B.~K.},
       title={Characteristic kernels on groups and semigroups},
        date={2009},
   booktitle={Advances in neural information processing systems},
      editor={Koller, D.},
      editor={Schuurmans, D.},
      editor={Bengio, Y.},
      editor={Bottou, L.},
      volume={21},
   publisher={Curran Associates, Inc.},
  url={https://proceedings.neurips.cc/paper/2008/file/d07e70efcfab08731a97e7b91be644de-Paper.pdf},
}

\bib{FukumizuEtAl07}{incollection}{
      author={Fukumizu, K.},
      author={Gretton, A.},
      author={Xiaohai, S.},
      author={Sch\"{o}lkopf, B.},
       title={Kernel measures of conditional dependence},
        date={2008},
   booktitle={Advances in neural information processing systems 20},
      editor={Platt, J.~C.},
      editor={Koller, D.},
      editor={Singer, Y.},
      editor={Roweis, S.~T.},
   publisher={Curran Associates, Inc.},
       pages={489\ndash 496},
  url={http://papers.nips.cc/paper/3340-kernel-measures-of-conditional-dependence.pdf},
}

\bib{GiannakisEtAl22}{article}{
      author={Giannakis, D.},
      author={Ourmazd, A.},
      author={Pfeffer, P.},
      author={Schumacher, J.},
      author={Slawinska, J.},
       title={Embedding classical dynamics in a quantum computer},
        date={2022},
     journal={Phys. Rev. A},
      volume={105},
}

\bib{Gorka2014}{article}{
      author={G{\'o}rka, P.},
       title={Pego theorem on locally compact abelian groups},
        date={2014},
     journal={J. Algebra App.},
      volume={13},
      number={04},
       pages={1350143},
}

\bib{GorkaKostrzewa20}{article}{
      author={G\'orka, P.},
      author={Kostrzewa, T.},
       title={A second look of {S}obolev spaces on metrizable groups},
        date={2020},
     journal={Ann. Acad. Sci. Fenn.},
      volume={45},
       pages={95\ndash 120},
}

\bib{GrettonEtAl07}{incollection}{
      author={Gretton, A.},
      author={Borgwardt, K.},
      author={Rasch, M.},
      author={Sch\"{o}lkopf, B.},
      author={Smola, A.~J.},
       title={A kernel method for the two-sample-problem},
        date={2007},
   booktitle={Advances in neural information processing systems 19},
      editor={Sch\"{o}lkopf, B.},
      editor={Platt, J.~C.},
      editor={Hoffman, T.},
   publisher={MIT Press},
       pages={513\ndash 520},
  url={http://papers.nips.cc/paper/3110-a-kernel-method-for-the-two-sample-problem.pdf},
}

\bib{Grochenig07}{incollection}{
      author={Gr{\"o}chenig, K.},
       title={Weight functions in time-frequency analysis},
        date={2007},
   booktitle={Pseudodifferential operators: Partial differential equations and
  time-frequency analysis},
      editor={Rodino, L.},
      editor={others},
      series={Fields Inst. Commun.},
      volume={52},
   publisher={American Mathematical Society},
     address={Providence},
       pages={343\ndash 366},
}

\bib{James_uniform_2012}{book}{
      author={James, I.},
       title={Topological and {U}niform {S}paces},
   publisher={Springer Science \& Business Media},
        date={1987},
}

\bib{KermanSawyer94}{article}{
      author={Kerman, R.},
      author={Sawyer, E.},
       title={Convolution algebras with weighted rearrangement-invariant norm},
        date={1994},
     journal={Studia Math.},
      volume={108},
      number={2},
       pages={103\ndash 126},
}

\bib{KolomoitsevEtAl21}{misc}{
      author={Kolomoitsev, Y.},
      author={Lomako, T.},
      author={Tikhonov, S.},
       title={Sparse grid approximation in weighted {W}iener spaces},
        date={2021},
         url={https://arxiv.org/abs/2111.06335},
}

\bib{KuznetsovaMolitorBraun12}{article}{
      author={Kuznetsova, Yu.},
      author={Molitor-Braun, C.},
       title={Harmonic analysis of weighted {$L^p$}-algebras},
        date={2012},
     journal={Expo. Math.},
      volume={30},
       pages={124\ndash 153},
}

\bib{Kuznetsova06}{article}{
      author={Kuznetsova, Yu.~N.},
       title={Weighted {$L^p$}-algebras on groups},
        date={2006},
     journal={Funct. Anal. Appl.},
      volume={40},
      number={3},
       pages={234\ndash 236},
}

\bib{Morris1977}{book}{
      author={Morris, S.},
       title={Pontryagin {D}uality and the {S}tructure of {L}ocally {C}ompact
  {A}belian {G}roups},
   publisher={Cambridge University Press},
     address={Cambridge},
        date={1977},
}

\bib{NguyenEtAl22}{article}{
      author={Nguyen, V.~D.},
      author={Nguyen, V.~K.},
      author={Sickel, W.},
       title={{$s$}-numbers of embeddings of weighted {W}iener algebras},
        date={2022},
     journal={J. Approx. Theory},
      volume={279},
}

\bib{Nikolskii74}{article}{
      author={Nikol'ski{\v i}, N.~K.},
       title={Selected problems of weighted approximation and spectral
  analysis},
        date={1974},
     journal={Trudy Math. Inst. Steklov},
      volume={120},
}

\bib{PaulsenRaghupathi2016}{book}{
      author={Paulsen, V.~I.},
      author={Raghupathi, M.},
       title={An {I}ntroduction to the {T}heory of {R}eproducing {K}ernel
  {H}ilbert {S}paces},
      series={Cambridge Studies in Advanced Mathematics},
   publisher={Cambridge University Press},
     address={Cambridge},
        date={2016},
      volume={152},
}

\bib{Pietsch87}{book}{
      author={Pietsch, A.},
       title={Eigenvalues and {$s$}-{N}umbers},
   publisher={Cambridge University Press},
     address={Cambridge},
        date={1987},
}

\bib{Prager79}{article}{
      author={Pr{\'a}ger, M.},
       title={Universally optimal approximation of functionals},
        date={1979},
     journal={Appl. mat.},
      volume={24},
      number={20},
       pages={406\ndash 420},
         url={http://dml.cz/dmlcz/103824},
}

\bib{ReedSimon75}{book}{
      author={Reed, M.},
      author={Simon, B.},
       title={Methods of {M}odern {M}athematical {P}hysics. {II}: {F}ourier
  {A}nalysis, {S}elf-{A}djointness},
   publisher={Academic Press},
     address={San Diego},
        date={1975},
}

\bib{ReitSteg2000}{book}{
      author={Reiter, H.},
      author={Stegeman, J. D.},
       title={Classical {H}armonic {A}nalysis and {L}ocally {C}ompact
  {G}roups},
     edition={2},
   publisher={Oxford University Press},
     address={Oxford},
        date={2000},
}

\bib{Rudin1962}{book}{
      author={Rudin, W.},
       title={Fourier {A}nalysis on {G}roups},
   publisher={Wiley},
     address={New York},
        date={1962},
}

\bib{Rudin91}{book}{
      author={Rudin, W.},
       title={Functional {A}nalysis},
     edition={2},
   publisher={McGraw-Hill},
     address={New York},
        date={1991},
}

\bib{SmolaEtAl07}{inproceedings}{
      author={Smola, A.},
      author={Gretton, A.},
      author={Song, L.},
      author={Sch\"olkopf, B.},
       title={A {H}ilbert space embedding for distributions},
        date={2007},
   booktitle={Algorithmic learning theory: 18th international conference},
   publisher={Springer},
     address={Berlin},
       pages={13\ndash 31},
}

\bib{SriperumbudurEtAl2010}{article}{
      author={Sriperumbudur, B.},
      author={others},
       title={{H}ilbert space embeddings and metrics on probability measures},
        date={2010},
     journal={J. Mach. Learn. Res.},
      volume={11},
      number={Apr},
       pages={1517\ndash 1561},
         url={http://www.jmlr.org/papers/v11/sriperumbudur10a.html},
}

\bib{SriperumbudurEtAl2011}{article}{
      author={Sriperumbudur, B.},
      author={Fukumizu, K.},
      author={Lanckriet, G.},
       title={Universality, characteristic kernels and {RKHS} embedding of
  measures},
        date={2011},
     journal={J. Mach. Learn. Res.},
      volume={12},
       pages={2389\ndash 2410},
}

\bib{Steinwart_inflnc_2001}{article}{
      author={Steinwart, I.},
       title={On the influence of the kernel on the consistency of support
  vector machines},
        date={2001},
     journal={J. Mach. Learn. Res.},
      volume={2},
      number={Nov},
       pages={67\ndash 93},
  url={http://www.jmlr.org/papers/volume2/steinwart01a/steinwart01a.pdf},
}

\bib{Wermer54}{article}{
      author={Wermer, J.},
       title={On a class of normed rings},
        date={1954},
     journal={Ark. Mat.},
      volume={2},
      number={6},
       pages={537\ndash 551},
}

\end{biblist}
\end{bibdiv}

\end{document}